\theoremstyle{plain}
\newtheorem{theorem}{Theorem}[section]
\newtheorem{lemma}[theorem]{Lemma}
\newtheorem{corollary}[theorem]{Corollary}
\theoremstyle{definition}
\newtheorem{remark}[theorem]{Remark}
\begin{document}

  \begin{center}
 {\Large \bf Bounds on the Inverse symmetric division deg index\\[2mm] and the relation with other topological indices of graphs}\\

 \vspace{6mm}

 \bf {Kinkar Chandra Das$^a$, B. R. Rakshith$^{b,}\footnote{Corresponding author}$, Wojciech Macek$^c$}

 \vspace{5mm}

{\it $^a$Department of Mathematics, Sungkyunkwan University, \\
 Suwon 16419, Republic of Korea\/} \\
{\tt E-mail: kinkardas2003@gmail.com}\\[2mm]

{\it $^b$Department of Mathematics, Manipal Institute of Technology, \\
Manipal Academy of Higher Education, Manipal 576 104, India.\/} \\
		{\rm E-mail:} {\tt ranmsc08@yahoo.co.in}\\[2mm]

{\it $^c$Faculty of Mechanical Engineering and Ship Technology, Gda\'nsk University of Technology, \\
 80-233 Gda\'nsk, Poland\/} \\
{\tt E-mail: wojciech.macek@pg.edu.pl}

 \hspace*{4mm}

 \end{center}

 \baselineskip=0.23in

\begin{abstract}
Let $G=(V,E)$ be a simple graph. The concept of Inverse symmetric division deg index $(ISDD)$ was introduced in the chemical graph theory very recently. In spite of this, a few papers have already appeared with this index in the literature. Ghorbani et al. proposed Inverse symmetric division deg index and is defined as
$$ISDD(G)=\sum\limits_{v_iv_j\in E(G)}\,\displaystyle{\frac{d_id_j}{d^2_i+d^2_j}},$$
where $d_i$ is the degree of the vertex $v_i$ in $G$. In this paper, we obtain some lower and upper bounds on the inverse symmetric division deg index $(ISDD)$ of graphs in terms of various graph parameters, with identifying extremal graphs. Moreover, we present two relations between the Inverse symmetric division deg index and the various topological indices of graphs. Finally, we give concluding remarks with future work.

\end{abstract}

\noindent
{\bf MSC}:  05C07, 05C09, 05C35\\[1mm]
{\bf Keywords:} Graph, Symmetric division deg index, Inverse symmetric division deg index, First Zagreb index, Second Zagreb index, Forgotten topological index, Geometric-arithmetic index.
\vskip0.2cm
\noindent {\bf Declaration of Competing Interest.} The authors declare no conflict of interest.

\baselineskip=0.30in

\section{Introduction}

Molecular descriptors are essential in mathematical chemistry, especially in QSPR/QSAR investigations. The so-called topological descriptors have a specific place among them. These days, there are a number of topological indices with chemical applications. The structural characteristics of the graphs that were utilized to calculate them can be used to classify them. Numerous indices and their relationship to certain chemical compounds' physical properties have been researched in the literature. Some of the topological indices \cite{E11,E12,E13,WI1,HO1,R,DA0,DA3,GT,DZT,E,CD1,BHD1,CW1,DA5,DA4,FGV,20,21,22,25} are very popular not only in mathematical chemistry but also in graph theory.

\vspace*{2mm}

Let $G=(V,E)$ be a simple graph having $n$ vertices and $m$ edges, where $V(G)=\{v_1,v_2,\ldots,v_n\}$ with $|V(G)|=n$, and $|E(G)|=m$. The degree of the vertex $v_i$, denoted by $d_{i}$, is the number of edges incident to $v_i$. The neighbor set of the vertex $v_i$ is denoted by $N_G(v_i)$, that is, $|N_G(v_i)|=d_i$. We denoted by maximum degree $\Delta(G)=\max_{1\leq i\leq n}\,d_i$, and minimum degree $\delta(G)=\min_{1\leq i\leq n}\,d_i$. We write $v_iv_j\in E(G)$ if vertices $v_i$ and $v_j$ are adjacent in $G$. The book \cite{BM} contains the graph theoretic definitions.

\vspace*{2mm}

Two oldest degree based topological indices are the first Zagreb index $M_1$ and the second Zagreb index $M_2$. The first Zagreb index and the second Zagreb index of a graph $G$ are defined as follows:
$$M_1(G)=\sum\limits_{v_i\in V(G)}\,d^2_i=\sum\limits_{v_iv_j\in E(G)}\,(d_i+d_j)~~\mbox{ and }~~M_2(G)=\sum\limits_{v_iv_j\in E(G)}\,d_i\,d_j.$$
Mathematical properties on the Zagreb indices are reported in \cite{CA,GT,M1,WY,xu2011}, and the references therein.

The forgotten topological index of a graph $G$ is defined as follows:
$$F(G)=\sum\limits_{v_i\in V(G)}\,d^3_i=\sum\limits_{v_iv_j\in E(G)}\,(d^2_i+d^2_j).$$
Some mathematical results on the forgotten topological index is reported in \cite{CC,EMR,FG,LM1}.

The geometric-arithmetic index of a graph $G$ is denoted by $GA(G)$ and is defined as follows:
$$GA(G)=\sum\limits_{v_iv_j\in E(G)}\,\displaystyle{\frac{2\,\sqrt{d_id_j}}{d_i+d_j}}.$$
For its basic properties, including various lower and upper bounds, see \cite{DA1,26,yua10}.

\vspace*{2mm}

Vuki\v{c}evi\'c and Gasperov \cite{VG} looked into a new class of topological indices called the ``discrete Adriatic indices", which consists of $148$ indices, in order to improve QSPR/QSAR studies. One of these few indices is the symmetric division deg $(SDD)$ index, which is defined for a graph $G$ as
 $$SDD(G)=\sum\limits_{v_iv_j\in E(G)}\,\displaystyle{\frac{d^2_i+d^2_j}{d_id_j}}.$$
Furtula et al. conducted a thorough multidimensional analysis of the SDD index in \cite{FDG}, and it was discovered to be a feasible and useful topological index. This index outperformed a number of other well-known topological indices. Since then, a large number of papers specifically pertaining to SDD have been published; for instance, see \cite{AEM,DA4,23}.

\vspace*{2mm}

Ghorbani et al. \cite{M1} introduced the inverse SDD index, ISDD, which stands for Inverse symmetric division deg index. The $ISDD$ index of a graph $G$ is defned as follows:
$$ISDD(G)=\sum\limits_{v_iv_j\in E(G)}\,\displaystyle{\frac{d_id_j}{d^2_i+d^2_j}}.$$
The maximum and minimum trees of fixed order with respect to ISSD index have been characterized completely in \cite{M1}. Albalahi and Ali \cite{AA1} addressed the problem of finding the graphs having the largest and smallest ISDD index from the set of all connected unicyclic graphs having the specified order. In \cite{RSD1}, the authors presented some upper and lower bounds including extremal graphs on $ISDD$ of several class of graphs. A bipartite graph $G$ with a bipartition $U$ and $W$ such that every vertex $v_i$ in $U$ has the same degree $r$, and every vertex $v_j$ in $W$ has the same degree $s$, then $G$ will be called a $(r,s)$-semiregular bipartite graph.

\vspace*{2mm}

This paper is structured as follows. In Section 2, we gave some upper and lower bounds on $ISDD$ index of graphs with various graph parameters, and characterize the corresponding extremal graphs. In Section 3, we present some relations between $ISDD$ index with some popular topological indices of graphs. In Section 4, we give concluding remarks with future work.

\section{On $ISDD$ index of graphs}

In this section we give some lower and upper bounds on $ISDD$ of graphs in terms of different graph parameters, and characterize the corresponding extremal graphs.
The following result is obtained from the proof of Theorem 2.3 in \cite{DA1}.
\begin{lemma} {\rm \cite{DA1}} \label{p1a2} For any edge $v_iv_j\in E(G)\,(d_i\geq d_j)$,
   $$\frac{d_i\,d_j}{d^2_i+d^2_j}\geq \frac{\Delta\,\delta}{\Delta^2+\delta^2},$$
where $\Delta$ is the maximum degree and $\delta$ is the minimum degree in $G$. Moreover, the equality holds if and only if $(d_i,\,d_j)=(\Delta,\,\delta)$.
\end{lemma}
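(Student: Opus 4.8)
The plan is to clear denominators and reduce the inequality to a single sign condition that factors nicely. Both $d_i^2+d_j^2$ and $\Delta^2+\delta^2$ are positive, so the claim $\frac{d_id_j}{d_i^2+d_j^2}\ge\frac{\Delta\delta}{\Delta^2+\delta^2}$ is equivalent to
$$d_id_j\,(\Delta^2+\delta^2)-\Delta\delta\,(d_i^2+d_j^2)\ge 0.$$
The first step is to verify, by a direct expansion, the identity
$$d_id_j\,(\Delta^2+\delta^2)-\Delta\delta\,(d_i^2+d_j^2)=(\Delta d_i-\delta d_j)(\Delta d_j-\delta d_i),$$
which turns the problem into showing that this product of two linear expressions is nonnegative.

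For the second step I would read off the sign of each factor from the degree bounds $\delta\le d_j\le d_i\le\Delta$ (recall the hypothesis $d_i\ge d_j$). Since $\Delta\ge\delta>0$ and $d_i\ge d_j>0$, multiplying these gives $\Delta d_i\ge\delta d_j$, so the first factor is nonnegative; since $\Delta\ge d_i>0$ and $d_j\ge\delta>0$, multiplying gives $\Delta d_j\ge\delta d_i$, so the second factor is nonnegative. Hence their product is nonnegative and the bound follows.

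The last step, the equality discussion, is where a little care is needed, and it is the only real obstacle. Equality forces one of the two factors to vanish. If $\Delta d_j-\delta d_i=0$, I would insert the intermediate term $d_id_j$ into the chain $\Delta d_j\ge d_id_j\ge\delta d_i$; equality throughout cancels $d_j>0$ to give $\Delta=d_i$ and cancels $d_i>0$ to give $d_j=\delta$, so $(d_i,d_j)=(\Delta,\delta)$. Symmetrically, $\Delta d_i-\delta d_j=0$ together with the chain $\Delta d_i\ge\delta d_i\ge\delta d_j$ forces $\Delta=\delta$ and $d_i=d_j$, which again yields $(d_i,d_j)=(\Delta,\delta)$. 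The converse is immediate: substituting $(d_i,d_j)=(\Delta,\delta)$ makes the product zero. Note that only the real bounds on the degrees are used here, not integrality.

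Finally, one could bypass the factorization entirely by setting $t=d_i/d_j\ge 1$ and observing that $\frac{d_id_j}{d_i^2+d_j^2}=\frac{t}{t^2+1}$ is a strictly decreasing function of $t$ on $[1,\infty)$; since $t$ is maximized at $t=\Delta/\delta$ under the constraints, the minimum of the left-hand side is attained exactly at $(d_i,d_j)=(\Delta,\delta)$. I would nonetheless keep the factorization as the primary argument, because it delivers the equality case most transparently.
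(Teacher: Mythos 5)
Your proof is correct, and its main argument takes a genuinely different route from the one the paper relies on. The paper does not prove this lemma itself (it is quoted from \cite{DA1}), but its proof of the companion Lemma \ref{1a2} shows the intended method: from $\delta\le d_j\le d_i\le\Delta$ one derives $\sqrt{d_i/d_j}-\sqrt{d_j/d_i}\le\sqrt{\Delta/\delta}-\sqrt{\delta/\Delta}$, notes both sides are nonnegative, squares, and arrives at $\frac{d_i}{d_j}+\frac{d_j}{d_i}\le\frac{\Delta}{\delta}+\frac{\delta}{\Delta}$, which is the reciprocal form of the claim. You instead clear denominators and use the exact factorization $d_id_j\,(\Delta^2+\delta^2)-\Delta\delta\,(d_i^2+d_j^2)=(\Delta d_i-\delta d_j)(\Delta d_j-\delta d_i)$, whose factors are nonnegative by $\Delta\ge\delta$, $d_i\ge d_j$ and $\Delta\ge d_i$, $d_j\ge\delta$ respectively; this identity checks out by direct expansion. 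Your route is more elementary in that it avoids the square-root manipulation and the (easy to forget) verification that both sides are nonnegative before squaring, and it makes the equality analysis cleaner: the product vanishes iff a factor does, and your chains $\Delta d_j\ge d_id_j\ge\delta d_i$ and $\Delta d_i\ge\delta d_i\ge\delta d_j$ correctly force $(d_i,d_j)=(\Delta,\delta)$ in each case. What the paper's approach buys is a template that transfers verbatim to the variants with shifted bounds (as in Lemma \ref{1a2}), though your factorization also adapts by substituting $\Delta-1$ or $\delta+1$ for $\Delta$ or $\delta$. Your closing observation about the monotonicity of $t/(t^2+1)$ on $[1,\infty)$ is essentially the paper's method in disguise (both amount to the monotonicity of $t+1/t$), so you have in effect supplied both arguments.
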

In the above, we obtained the minimum value of $\displaystyle{\frac{d_i\,d_j}{d^2_i+d^2_j}}$ for any edge $v_iv_j\in E(G)$. We now focus on the second minimum value of $\displaystyle{\frac{d_i\,d_j}{d^2_i+d^2_j}}$ in the following result.
\begin{lemma} \label{1a2} For any edge $v_iv_j\in E(G)$ with $(d_i,\,d_j)\neq (\Delta,\,\delta)$,
   $$\frac{d_i\,d_j}{d^2_i+d^2_j}\geq \frac{(\Delta-1)\,\delta}{(\Delta-1)^2+\delta^2},$$
where $\Delta$ is the maximum degree and $\delta$ is the minimum degree in $G$. Moreover, the equality holds if and only if $(d_i,\,d_j)=(\Delta-1,\,\delta)$.
\end{lemma}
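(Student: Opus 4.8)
The plan is to reduce the two-variable minimization to a one-variable monotonicity statement. First I would use the symmetry of $f(d_i,d_j)=\frac{d_id_j}{d_i^2+d_j^2}$ to assume $d_i\geq d_j$, and rewrite $f$ in terms of the ratio $t=d_i/d_j\geq 1$: dividing numerator and denominator by $d_id_j$ gives $f=\frac{t}{t^2+1}=\bigl(t+t^{-1}\bigr)^{-1}$. Since $g(t)=t+t^{-1}$ satisfies $g(t_2)-g(t_1)=(t_2-t_1)\bigl(1-\tfrac{1}{t_1t_2}\bigr)>0$ for $1\leq t_1<t_2$, the function $g$ is strictly increasing on $[1,\infty)$, so $f$ is a strictly decreasing function of the ratio $t$. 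Hence minimizing $f$ over the admissible degree pairs is equivalent to \emph{maximizing} the ratio $d_i/d_j$.

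The heart of the argument is then to show that, among all pairs $(d_i,d_j)$ with $\delta\leq d_j\leq d_i\leq\Delta$ and $(d_i,d_j)\neq(\Delta,\delta)$, the ratio $d_i/d_j$ is largest precisely at $(d_i,d_j)=(\Delta-1,\delta)$. I would split into two cases according to the top degree. If $d_i\leq\Delta-1$, then $d_i/d_j\leq d_i/\delta\leq(\Delta-1)/\delta$, the first inequality being equality iff $d_j=\delta$ and the second iff $d_i=\Delta-1$, so equality throughout forces $(d_i,d_j)=(\Delta-1,\delta)$. If $d_i=\Delta$, then excluding $(\Delta,\delta)$ forces $d_j\geq\delta+1$, whence $d_i/d_j\leq\Delta/(\delta+1)$, and it remains to compare $\Delta/(\delta+1)$ with $(\Delta-1)/\delta$.

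This comparison is the only genuinely arithmetic step: cross-multiplying the positive denominators, $\frac{\Delta-1}{\delta}\geq\frac{\Delta}{\delta+1}$ is equivalent to $(\Delta-1)(\delta+1)-\Delta\delta=\Delta-\delta-1\geq 0$, which holds because the very existence of a pair with $d_i=\Delta$ and $d_j\geq\delta+1\leq\Delta$ forces $\delta+1\leq\Delta$. Combining the two cases yields $d_i/d_j\leq(\Delta-1)/\delta$ throughout, and therefore $f(d_i,d_j)\geq f(\Delta-1,\delta)=\frac{(\Delta-1)\delta}{(\Delta-1)^2+\delta^2}$ by the monotonicity established above.

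For the equality characterization I would trace back through both cases. Strict monotonicity of $g$ on $[1,\infty)$ makes $f$ strictly decreasing in $t$, so equality in the bound forces $d_i/d_j=(\Delta-1)/\delta$; Case~1 shows this ratio is attained only at $(\Delta-1,\delta)$, while in Case~2 the inequality $\Delta/(\delta+1)\leq(\Delta-1)/\delta$ is strict whenever $\Delta\geq\delta+2$, so no competing pair arises. The one point demanding care is the degenerate regime $\Delta=\delta+1$, where $\Delta/(\delta+1)=(\Delta-1)/\delta=1$ and $(\Delta-1,\delta)=(\delta,\delta)$; here the extremal ratio is $1$ and the stated pair still attains the bound, so the conclusion remains consistent. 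I expect the case split --- and in particular the clean reduction of the case $d_i=\Delta$ to the single inequality $\Delta\geq\delta+1$ --- to be the main obstacle, while everything else follows from the ratio monotonicity.
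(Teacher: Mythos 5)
Your proof is correct and takes essentially the same route as the paper's: in both arguments the quantity is rewritten as a decreasing function of the ratio $d_i/d_j$, the ratio is bounded case-by-case according to which endpoint degree is extremal, and the candidate minima are compared via the inequality $\Delta-\delta-1\geq 0$ (the paper's (\ref{1kk4})). Your version is marginally leaner --- two disjoint cases and a single comparison instead of the paper's three overlapping cases and two-part Claim~1 --- and you rightly flag the degenerate regime $\Delta=\delta+1$, where in fact both your argument and the paper's leave the stated ``only if'' slightly imprecise, since an edge with $d_i=d_j=\Delta$ also attains equality there.
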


\begin{proof} Let $v_iv_j$ be any edge in $G$ with $d_i\geq d_j$. Since $(d_i,\,d_j)\neq (\Delta,\,\delta)$, we consider the following cases:

\vspace*{3mm}

\noindent
${\bf Case\,1.}$ $d_i\neq \Delta$. In this case $\delta\leq d_j\leq d_i\leq \Delta-1$. One can easily see that
$$\displaystyle{\frac{d_i}{d_j}}\leq \displaystyle{\frac{\Delta-1}{\delta}},~\mbox{ that is, }~\sqrt{\displaystyle{\frac{d_i}{d_j}}}\leq \sqrt{\displaystyle{\frac{\Delta-1}{\delta}}}~\mbox{ and }\sqrt{\displaystyle{\frac{d_j}{d_i}}}\geq \sqrt{\displaystyle{\frac{\delta}{\Delta-1}}},$$
which implies
$$\sqrt{\frac{d_i}{d_j}}-\sqrt{\frac{d_j}{d_i}}\leq \sqrt{\frac{\Delta-1}{\delta}}-\sqrt{\frac{\delta}{\Delta-1}},$$
that is,
$$\left(\sqrt{\frac{d_i}{d_j}}-\sqrt{\frac{d_j}{d_i}}\right)^2\leq \left(\sqrt{\frac{\Delta-1}{\delta}}-\sqrt{\frac{\delta}{\Delta-1}}\right)^2,$$
that is,
$$\frac{d_i}{d_j}+\frac{d_j}{d_i}\leq \frac{\Delta-1}{\delta}+\frac{\delta}{\Delta-1},$$
that is,
\begin{equation}
\frac{d_i\,d_j}{d^2_i+d^2_j}\geq \frac{(\Delta-1)\,\delta}{(\Delta-1)^2+\delta^2}.\label{1kk1}
\end{equation}
Moreover, the equality holds in (\ref{1kk1}) if and only if $(d_i,\,d_j)=(\Delta-1,\,\delta)$.

\vspace*{3mm}

\noindent
${\bf Case\,2.}$ $d_j\neq \delta$. In this case $\delta+1\leq d_j\leq d_i\leq \Delta$. Similarly, as before, one can easily obtain that
$$\displaystyle{\frac{d_i}{d_j}}\leq \displaystyle{\frac{\Delta}{\delta+1}},~\mbox{ that is, }~\sqrt{\displaystyle{\frac{d_i}{d_j}}}\leq \sqrt{\displaystyle{\frac{\Delta}{\delta+1}}}~\mbox{ and }\sqrt{\displaystyle{\frac{d_j}{d_i}}}\geq \sqrt{\displaystyle{\frac{\delta+1}{\Delta}}},$$
which implies
\begin{equation}
\frac{d_i\,d_j}{d^2_i+d^2_j}\geq \frac{\Delta\,(\delta+1)}{\Delta^2+(\delta+1)^2}.\label{1kk2}
\end{equation}
Moreover, the equality holds in (\ref{1kk2}) if and only if $(d_i,\,d_j)=(\Delta,\,\delta+1)$.

\vspace*{3mm}

\noindent
${\bf Case\,3.}$ $d_i\neq \Delta$ and $d_j\neq \delta$. In this case $\delta+1\leq d_j\leq d_i\leq \Delta-1$. Similarly, as {\bf Case\,1}, one can easily obtain that
\begin{equation}
\frac{d_i\,d_j}{d^2_i+d^2_j}\geq \frac{(\Delta-1)\,(\delta+1)}{(\Delta-1)^2+(\delta+1)^2}.\label{1kk3}
\end{equation}
Moreover, the equality holds in (\ref{1kk3}) if and only if $(d_i,\,d_j)=(\Delta-1,\,\delta+1)$.

\vspace*{3mm}

\noindent
${\bf Claim\,1.}$
$$\frac{(\Delta-1)\,\delta}{(\Delta-1)^2+\delta^2}\leq \frac{\Delta\,(\delta+1)}{\Delta^2+(\delta+1)^2}\leq  \frac{(\Delta-1)\,(\delta+1)}{(\Delta-1)^2+(\delta+1)^2}.$$

\vspace*{3mm}

\noindent
{\bf Proof of Claim 1.} First we prove the left inequality, that is, we have to prove that
        $$\frac{(\Delta-1)^2+\delta^2}{(\Delta-1)\,\delta}\geq \frac{\Delta^2+(\delta+1)^2}{\Delta\,(\delta+1)},$$
that is,
$$\frac{\Delta-1}{\delta}+\frac{\delta}{\Delta-1}\geq \frac{\Delta}{\delta+1}+\frac{\delta+1}{\Delta},$$
that is,
\begin{equation}
\frac{\Delta-\delta-1}{\delta\,(\delta+1)}\geq \frac{\Delta-\delta-1}{\Delta\,(\Delta-1)}.\label{1kk4}
\end{equation}
Since $(d_i,\,d_j)\neq (\Delta,\,\delta)$, we have $\Delta\neq \delta$, that is, $\Delta\geq \delta+1$. If $\Delta=\delta+1$, then (\ref{1kk4}) holds. Otherwise, $\Delta\geq \delta+2$. Then $\Delta\,(\Delta-1)>\delta\,(\delta+1)$, and hence (\ref{1kk4}) strictly holds.

\vspace*{3mm}

Next we prove the right inequality. Since $d_i\neq \Delta$ and $d_j\neq \delta$, we obtain $\delta+1\leq d_j\leq d_i\leq \Delta-1$. We have to prove that
 $$\frac{\Delta\,(\delta+1)}{\Delta^2+(\delta+1)^2}\leq  \frac{(\Delta-1)\,(\delta+1)}{(\Delta-1)^2+(\delta+1)^2},~\mbox{ that is, }~\frac{\Delta-1}{\delta+1}+\frac{\delta+1}{\Delta-1}\leq \frac{\Delta}{\delta+1}+\frac{\delta+1}{\Delta},$$
that is,
  $$\Delta\,(\Delta-1)\geq (\delta+1)^2,$$
which is always true as $\Delta\geq \delta+2$. This proves the {\bf Claim 1}.

\vspace*{3mm}

From (\ref{1kk1})--(\ref{1kk3}) with {\bf Claim 1}, we obtain
$$\frac{d_i\,d_j}{d^2_i+d^2_j}\geq \frac{(\Delta-1)\,\delta}{(\Delta-1)^2+\delta^2}$$
with equality if and only if $(d_i,\,d_j)=(\Delta-1,\,\delta)$.
\end{proof}

\begin{corollary} \label{1a3} Let $T$ be a tree of order $n$ and also let $v_iv_j\in E(T)$ be any edge. If $(d_i,\,d_j)\neq (n-1,\,1)$, then
   $$\frac{d_i\,d_j}{d^2_i+d^2_j}\geq \frac{n-2}{(n-2)^2+1}>\frac{n-1}{(n-1)^2+1}.$$
\end{corollary}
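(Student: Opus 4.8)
The plan is to derive this directly from Lemmas \ref{p1a2} and \ref{1a2}, using the monotonicity of the auxiliary function $g(t)=t/(t^2+1)$ as the bridge between degree bounds and bounds on the quotient $\frac{d_id_j}{d_i^2+d_j^2}$. First I would record two structural facts and one analytic fact (assuming $n\geq 3$, which is needed for the final strict inequality to even be a true numerical statement). Every tree of order $n\geq 2$ has a leaf, so $\delta=1$; and its maximum degree satisfies $2\leq \Delta\leq n-1$. Analytically, $g'(t)=(1-t^2)/(t^2+1)^2<0$ for $t>1$, so $g$ is strictly decreasing on $[1,\infty)$; since $\frac{d_id_j}{d_i^2+d_j^2}=g(d_i/d_j)$ for $d_i\geq d_j$, it is this monotonicity that converts ``$\Delta\leq n-1$'' into a numerical lower bound on the quotient.

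The one genuine wrinkle — the reason one cannot merely quote Lemma \ref{1a2} — is that the hypothesis here reads $(d_i,d_j)\neq(n-1,1)$ rather than $(d_i,d_j)\neq(\Delta,\delta)$, and for a general tree $\Delta$ need not equal $n-1$. I would therefore split on the value of $\Delta$. If $\Delta=n-1$, then $(\Delta,\delta)=(n-1,1)$, so the two exclusion conditions coincide and Lemma \ref{1a2} (with $\delta=1$) applies verbatim to give $\frac{d_id_j}{d_i^2+d_j^2}\geq \frac{(n-2)\cdot 1}{(n-2)^2+1}$. If instead $\Delta\leq n-2$, then no edge of $T$ can realize the pair $(n-1,1)$, so the exclusion hypothesis is vacuous and every edge is eligible; here I would apply Lemma \ref{p1a2} (again with $\delta=1$) to obtain $\frac{d_id_j}{d_i^2+d_j^2}\geq \frac{\Delta}{\Delta^2+1}=g(\Delta)$, and then invoke $2\leq \Delta\leq n-2$ together with the decrease of $g$ to conclude $g(\Delta)\geq g(n-2)=\frac{n-2}{(n-2)^2+1}$.

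Combining the two cases yields the asserted lower bound $\frac{n-2}{(n-2)^2+1}$, and the concluding strict inequality is then immediate: since $1\leq n-2<n-1$ lie on the strictly decreasing branch of $g$, we get $g(n-2)>g(n-1)$, i.e. $\frac{n-2}{(n-2)^2+1}>\frac{n-1}{(n-1)^2+1}$. I expect the only real point requiring care to be the bookkeeping in the case $\Delta\leq n-2$: one must justify that the exclusion hypothesis becomes automatic there and that $\Delta\geq 2$ (so $g$ is applied on its decreasing branch), both of which hold for every tree on $n\geq 3$ vertices. The $\Delta=n-1$ case is a one-line specialization of Lemma \ref{1a2} and presents no difficulty.
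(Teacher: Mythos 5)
Your proof is correct, and its core coincides with the paper's: set $\delta=1$ for a tree, invoke Lemma \ref{1a2} to get the bound $(\Delta-1)/((\Delta-1)^2+1)$, and use the monotonicity of $g(t)=t/(t^2+1)$ on $[1,\infty)$ to convert $\Delta\le n-1$ into the stated bound $(n-2)/((n-2)^2+1)$ and to obtain the final strict inequality $g(n-2)>g(n-1)$. The genuine difference is your case split on whether $\Delta=n-1$, and it is not cosmetic: the corollary excludes only the pair $(n-1,1)$, whereas Lemma \ref{1a2} excludes $(d_i,d_j)=(\Delta,\delta)=(\Delta,1)$, and these exclusions coincide only when $\Delta=n-1$. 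The paper's proof applies Lemma \ref{1a2} ``for $(d_i,d_j)\neq(\Delta,1)$'' and never addresses an edge with $(d_i,d_j)=(\Delta,1)$ and $\Delta\le n-2$ --- for instance a pendant edge at a maximum-degree vertex of a tree that is not a star --- even though such an edge satisfies the corollary's hypothesis. Your second case covers exactly these edges by falling back on the first lemma of the section, $d_id_j/(d_i^2+d_j^2)\ge\Delta\delta/(\Delta^2+\delta^2)=g(\Delta)\ge g(n-2)$ for $2\le\Delta\le n-2$, so your argument is in fact the more complete of the two; the supporting observations you flag ($\Delta\ge 2$ for $n\ge 3$, and the vacuity of the exclusion when $\Delta\le n-2$) are all correctly justified.
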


\begin{proof} Let $v_iv_j\in E(T)$ be any edge with $d_i\geq d_j$. Since $(d_i,\,d_j)\neq (n-1,\,1)$, we obtain, $n>3$. One can easily check that
  $$\frac{n-2}{(n-2)^2+1}>\frac{(n-1)}{(n-1)^2+1}.$$

Since $T$ is a tree, we have $\delta=1$. By Lemma \ref{1a2}, for $(d_i,\,d_j)\neq (\Delta,\,1)$, we obtain
             $$\frac{d_i\,d_j}{d^2_i+d^2_j}\geq \frac{\Delta-1}{(\Delta-1)^2+1}.$$

We have to prove that
  $$\frac{\Delta-1}{(\Delta-1)^2+1}\geq \frac{n-2}{(n-2)^2+1},$$
that is,
  $$n-2+\frac{1}{n-2}\geq \Delta-1+\frac{1}{\Delta-1},$$
which is always true as $\Delta\leq n-1$. Hence
    $$\frac{d_i\,d_j}{d^2_i+d^2_j}\geq \frac{n-2}{(n-2)^2+1}>\frac{(n-1)}{(n-1)^2+1}$$
This completes the proof of the result.
\end{proof}

\begin{figure}[ht!]
\begin{center}
\begin{tikzpicture}[scale=0.5,style=thick]
\def\vr{7pt}
\def\br{0.6pt}

\draw (0,0)-- (1,2);
\draw (1,0)-- (1,2);
\draw (2,0)-- (1,2);
\draw (3,0)-- (4,2);
\draw (4,0)-- (4,2);
\draw (5,0)-- (4,2);
\draw (6,0)-- (7,2);
\draw (7,0)-- (7,2);
\draw (8,0)-- (7,2);
\draw (9,0)-- (10,2);
\draw (10,0)-- (10,2);
\draw (11,0)-- (10,2);

\draw (0,0)-- (2.5,-2);
\draw (1,0)-- (2.5,-2);
\draw (2,0)-- (2.5,-2);
\draw (3,0)-- (2.5,-2);
\draw (4,0)-- (5.5,-2);
\draw (5,0)-- (5.5,-2);
\draw (6,0)-- (5.5,-2);
\draw (7,0)-- (5.5,-2);
\draw (8,0)-- (8.5,-2);
\draw (9,0)-- (8.5,-2);
\draw (10,0)-- (8.5,-2);
\draw (11,0)-- (8.5,-2);

\draw (0,0)  [fill=black] circle (\vr);
\draw (1,0)  [fill=black] circle (\vr);
\draw (2,0)  [fill=black] circle (\vr);
\draw (3,0)  [fill=black] circle (\vr);
\draw (4,0)  [fill=black] circle (\vr);
\draw (5,0)  [fill=black] circle (\vr);
\draw (6,0)  [fill=black] circle (\vr);
\draw (7,0)  [fill=black] circle (\vr);
\draw (8,0)  [fill=black] circle (\vr);
\draw (9,0)  [fill=black] circle (\vr);
\draw (10,0)  [fill=black] circle (\vr);
\draw (11,0)  [fill=black] circle (\vr);

\draw (1,2)  [fill=black] circle (\vr);
\draw (4,2)  [fill=black] circle (\vr);
\draw (7,2)  [fill=black] circle (\vr);
\draw (10,2)  [fill=black] circle (\vr);
\draw (2.5,-2)  [fill=black] circle (\vr);
\draw (5.5,-2)  [fill=black] circle (\vr);
\draw (8.5,-2)  [fill=black] circle (\vr);

\draw (5.5,-3.5) node {$H_1$};

\end{tikzpicture}
\end{center}
\vspace{-0.6cm}
\caption{A graph $H_1\in \Gamma_1$.}\label{f1}\vspace{-0.1cm}
 \end{figure}
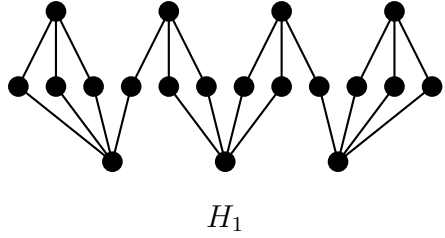

\vspace*{4mm}

Let $\Gamma_1$ be a class of connected graphs $H=(V,E)$ with $\ell\,(>0)$ edges $v_iv_j\in E(H)$, $(d_i,d_j)=(\Delta,\,\delta)$, and $m-\ell\,(>0)$ edges $v_iv_j\in E(H)$, $(d_i,d_j)=(\Delta-1,\,\delta)$, where $\Delta$, $\delta$ and $m$ are the maximum degree, the minimum degree, and the number of edges of graph $H$, respectively. A graph $H_1\in \Gamma_1$ (see, Fig. \ref{f1}). We now give a lower bound on $ISDD$ of graphs in terms of different graph parameters, and characterize the extremal graphs.
\begin{theorem} \label{aa2} Let $G$ be a graph with $m$ edges and the maximum degree $\Delta$ \& the minimum degree $\delta$. Then
   $$ISDD(G)\geq \frac{\Delta\,\delta}{\Delta^2+\delta^2}\,\ell+\frac{(\Delta-1)\,\delta}{(\Delta-1)^2+\delta^2}\,(m-\ell),$$
where $\ell\,(\geq 0)$ is the number of edges $v_iv_j\in E(G)$ such that $(d_i,\,d_j)=(\Delta,\,\delta)$. Moreover, if $G$ is connected, then the equality holds if and only if $G$ is a regular graph or $G$ is a $(\Delta,\,\delta)$-semiregular bipartite graph or $G\in \Gamma_1$.
\end{theorem}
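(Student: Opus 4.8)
The plan is to establish the inequality by splitting the edge set according to the degree pair carried by each edge, and then to read off the equality case directly from the equality clause already recorded in Lemma \ref{1a2}.

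First I would partition $E(G)$ into the set $E_1$ consisting of the $\ell$ edges $v_iv_j$ with $(d_i,d_j)=(\Delta,\delta)$, and the set $E_2=E(G)\setminus E_1$ of the remaining $m-\ell$ edges, for which $(d_i,d_j)\neq(\Delta,\delta)$. On $E_1$ every summand of $ISDD(G)$ equals $\frac{\Delta\delta}{\Delta^2+\delta^2}$ exactly, while on $E_2$ Lemma \ref{1a2} gives $\frac{d_id_j}{d_i^2+d_j^2}\geq\frac{(\Delta-1)\delta}{(\Delta-1)^2+\delta^2}$ for each summand. Adding the two contributions yields
$$ISDD(G)\geq\frac{\Delta\delta}{\Delta^2+\delta^2}\,\ell+\frac{(\Delta-1)\delta}{(\Delta-1)^2+\delta^2}\,(m-\ell),$$
which is the claimed bound.

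For the equality discussion (assuming $G$ connected), I would note that, since every summand on $E_1$ already attains its listed value, equality in the displayed bound holds precisely when every summand on $E_2$ attains the Lemma \ref{1a2} bound. By the equality clause of that lemma this happens if and only if $(d_i,d_j)=(\Delta-1,\delta)$ for each edge in $E_2$. Hence equality is equivalent to the purely combinatorial condition that every edge of $G$ has degree pair either $(\Delta,\delta)$ or $(\Delta-1,\delta)$, and the task reduces to describing the connected graphs with this property.

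The final step is this structural translation, which I expect to be the main obstacle. I would distinguish the cases $m-\ell=0$ and $m-\ell>0$. If $m-\ell=0$, every edge joins a vertex of degree $\Delta$ to a vertex of degree $\delta$; when $\Delta=\delta$ this forces all degrees equal, so $G$ is regular, and when $\Delta>\delta$ the degree-$\Delta$ and degree-$\delta$ vertices form a bipartition, since no edge can lie inside a degree class (that would give a pair $(\Delta,\Delta)$ or $(\delta,\delta)$, neither equal to $(\Delta,\delta)$), so $G$ is a $(\Delta,\delta)$-semiregular bipartite graph. If $m-\ell>0$, there is an edge of type $(\Delta-1,\delta)$, which forces $\Delta-1\geq\delta$ and in particular $\Delta>\delta$; but then any vertex attaining the maximum degree $\Delta$ has all its incident edges of type $(\Delta,\delta)$, because its degree $\Delta$ cannot appear in a $(\Delta-1,\delta)$ pair, so $\ell>0$ as well. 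Thus both $\ell>0$ and $m-\ell>0$, and $G$ lies in $\Gamma_1$ by definition; this argument also shows that the excluded possibility $\ell=0$ never arises. The converse is immediate: in each of the three families every edge contributes exactly one of the two listed values, so the inequality becomes an equality by direct substitution.
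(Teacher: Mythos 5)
Your proposal is correct and follows essentially the same route as the paper's proof: split $E(G)$ into the $(\Delta,\delta)$-edges and the rest, apply Lemma \ref{1a2} termwise to the latter, and then read the equality case off as the condition that every edge has degree pair $(\Delta,\delta)$ or $(\Delta-1,\delta)$, distinguishing $m=\ell$ (regular or $(\Delta,\delta)$-semiregular bipartite) from $m>\ell$ (where, as you correctly observe and the paper argues by contradiction, $\ell>0$ is forced, giving $G\in\Gamma_1$). No gaps.
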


\begin{proof} We divide the edges $E(G)$ in the following two classes:
\begin{align*}
&A=\Big\{v_iv_j\in E(G):\,(d_i,\,d_j)=(\Delta,\,\delta)\Big\},\\
~\mbox{ and }~&B=\Big\{v_iv_j\in E(G):\,(d_i,\,d_j)\neq (\Delta,\,\delta)\Big\},
\end{align*}
that is, $A\cup B=E(G)$ and $A\cap B=\emptyset$. Since $m$ is the number of edges, we have $|A|+|B|=m$. One can easily see that $|A|=\ell\geq 0$ and $|B|=m-\ell\geq 0$. Since $|A|=\ell$, we have
  $$\sum\limits_{v_iv_j\in E(G),\atop (d_i,\,d_j)=(\Delta,\,\delta)}\,\frac{d_i\,d_j}{d^2_i+d^2_j}=\frac{\Delta\,\delta}{\Delta^2+\delta^2}\,\ell.$$
By Lemma \ref{1a2}, we obtain
\begin{equation}
\sum\limits_{v_iv_j\in E(G),\atop (d_i,\,d_j)\neq (\Delta,\,\delta)}\,\frac{d_i\,d_j}{d^2_i+d^2_j}\geq \frac{(\Delta-1)\,\delta}{(\Delta-1)^2+\delta^2}\,(m-\ell)\label{1w2}
\end{equation}
with equality if and only if $(d_i,d_j)=(\Delta-1,\,\delta)$ for any edge $v_iv_j\in E(G)$ with $(d_i,\,d_j)\neq (\Delta,\,\delta)$. Using the above results, we obtain
\begin{align*}
ISDD(G)=\sum\limits_{v_iv_j\in E(G)}\,\frac{d_i\,d_j}{d^2_i+d^2_j}&=\sum\limits_{v_iv_j\in E(G),\atop (d_i,\,d_j)=(\Delta,\,\delta)}\,\frac{d_i\,d_j}{d^2_i+d^2_j}+\sum\limits_{v_iv_j\in E(G),\atop (d_i,\,d_j)\neq (\Delta,\,\delta)}\,\frac{d_i\,d_j}{d^2_i+d^2_j}\\[3mm]
&\geq \frac{\Delta\,\delta}{\Delta^2+\delta^2}\,\ell+\frac{(\Delta-1)\,\delta}{(\Delta-1)^2+\delta^2}\,(m-\ell).
\end{align*}
The first part of the proof is done.

\vspace*{3mm}

Suppose that equality holds. Then the equality holds in (\ref{1w2}). From the equality in (\ref{1w2}), we have $(d_i,d_j)=(\Delta-1,\,\delta)$ for any edge $v_iv_j\in B$. Thus we have $\ell$ edges $v_iv_j\in E(G)$ with $(d_i,d_j)=(\Delta,\,\delta)$, and $m-\ell$ edges $v_iv_j\in E(G)$ with $(d_i,\,d_j)=(\Delta-1,\,\delta)$. We have $m\geq \ell$. First we assume that $m=\ell$. Then $|B|=0$ and $|A|=m$, that is, all of the edges $v_iv_j\in E(G)$ with $(d_i,d_j)=(\Delta,\,\delta)$ in $G$. If $\Delta=\delta$, then $G$ is a regular graph. Otherwise, $\Delta\neq \delta$. Therefore, the vertices of degree $\Delta$ are adjacent to the vertices of degree $\delta$ and the vertices of degree $\delta$ are adjacent to the vertices of degree $\Delta$. Since $G$ is connected, one can easily see that $V(G)=U\cup W$, $U\cap W=\emptyset$ such that $v_iv_j\in E(G)$, $v_i\in U$, $v_j\in W$, where $U=\{v_k\in V(G):\,d_k=\Delta\}$ and $W=\{v_k\in V(G):\,d_k=\delta\}$. Hence $G$ is a $(\Delta,\,\delta)$-semiregular bipartite graph.

\vspace*{3mm}

Next we assume that $m>\ell\geq 0$. If $\ell=0$, then all the $m$ edges $v_iv_j\in E(G)$ with $(d_i,d_j)=(\Delta-1,\,\delta)$. This means that all the vertices are of degree either $\Delta-1$ or $\delta$, this is a contradiction as no vertex of degree $\Delta$. Otherwise, $m>\ell>0$. In this case $\ell\,(>0)$ edges $v_iv_j\in E(G)$ with $(d_i,d_j)=(\Delta,\,\delta)$ and $m-\ell\,(>0)$ edges $v_iv_j\in E(G)$ with $(d_i,\,d_j)=(\Delta-1,\,\delta)$. Since $G$ is connected, we have $G\in \Gamma_1$.

\vspace*{3mm}

Conversely, let $G$ be a regular graph. Then $m=\ell$ and $\displaystyle{\frac{d_i\,d_j}{d^2_i+d^2_j}}=\frac{1}{2}$ for any edge $v_iv_j\in E(G)$, and hence
  $$ISDD(G)=\frac{m}{2}=\frac{\Delta\,\delta}{\Delta^2+\delta^2}\,\ell+\frac{(\Delta-1)\,\delta}{(\Delta-1)^2+\delta^2}\,(m-\ell).$$

\vspace*{3mm}

Let $G$ be a $(\Delta,\,\delta)$-semiregular bipartite graph. Then $m=\ell$ and for any edge $v_iv_j\in E(G)$, we have $(d_i,d_j)=(\Delta,\,\delta)$, that is, $\displaystyle{\frac{d_i\,d_j}{d^2_i+d^2_j}}=\frac{\Delta\,\delta}{\Delta^2+\delta^2}$, and hence
 $$ISDD(G)=\frac{\Delta\,\delta}{\Delta^2+\delta^2}\,m=\frac{\Delta\,\delta}{\Delta^2+\delta^2}\,\ell+\frac{(\Delta-1)\,\delta}{(\Delta-1)^2+\delta^2}\,(m-\ell).$$

\vspace*{3mm}

Let $G\in \Gamma_1$. From the definition, we obtain
\begin{align*}
&\frac{d_i\,d_j}{d^2_i+d^2_j}=\frac{\Delta\,\delta}{\Delta^2+\delta^2}~\mbox{ for }v_iv_j\in E(G)~\mbox{ with }(d_i,d_j)=(\Delta,\,\delta)\\
   ~\mbox{ and }&\\
   &~\frac{d_i\,d_j}{d^2_i+d^2_j}=\frac{(\Delta-1)\,\delta}{(\Delta-1)^2+\delta^2}\mbox{ for }v_iv_j\in E(G)~\mbox{ with }(d_i,d_j)=(\Delta-1,\,\delta).
\end{align*}
Hence
$$ISDD(G)=\frac{\Delta\,\delta}{\Delta^2+\delta^2}\,\ell+\frac{(\Delta-1)\,\delta}{(\Delta-1)^2+\delta^2}\,(m-\ell).$$
\end{proof}

\begin{figure}[ht!]
\begin{center}
\begin{tikzpicture}[scale=0.5,style=thick]
\def\vr{7pt}
\def\br{0.6pt}

\draw (0,0)-- (3,3);
\draw (0,0)-- (11,3);
\draw (2,0)-- (3,3);
\draw (2,0)-- (11,3);
\draw (4,0)-- (3,3);
\draw (4,0)-- (7,3);
\draw (6,0)-- (3,3);
\draw (6,0)-- (7,3);
\draw (8,0)-- (3,3);
\draw (8,0)-- (11,3);
\draw (10,0)-- (11,3);
\draw (10,0)-- (15,3);
\draw (12,0)-- (7,3);
\draw (12,0)-- (15,3);
\draw (14,0)-- (11,3);
\draw (14,0)-- (15,3);
\draw (16,0)-- (7,3);
\draw (16,0)-- (15,3);
\draw (18,0)-- (7,3);
\draw (18,0)-- (15,3);

\draw (0,0)-- (2,0);
\draw (2,0)-- (4,0);
\draw (4,0)-- (6,0);
\draw (8,0)-- (10,0);
\draw (10,0)-- (12,0);
\draw (12,0)-- (14,0);
\draw (14,0)-- (16,0);
\draw (16,0)-- (18,0);
\draw (3,3)-- (7,3);
\draw (11,3)-- (15,3);

\draw (0,0)  [fill=black] circle (\vr);
\draw (2,0)  [fill=black] circle (\vr);
\draw (4,0)  [fill=black] circle (\vr);
\draw (6,0)  [fill=black] circle (\vr);
\draw (8,0)  [fill=black] circle (\vr);
\draw (10,0)  [fill=black] circle (\vr);
\draw (12,0)  [fill=black] circle (\vr);
\draw (14,0)  [fill=black] circle (\vr);
\draw (16,0)  [fill=black] circle (\vr);
\draw (18,0)  [fill=black] circle (\vr);
\draw (3,3)  [fill=black] circle (\vr);
\draw (7,3)  [fill=black] circle (\vr);
\draw (11,3)  [fill=black] circle (\vr);
\draw (15,3)  [fill=black] circle (\vr);

\draw[-] (0,0) to [bend right=30] (4,0);
\draw[-] (0,0) to [bend right=30] (6,0);
\draw[-] (2,0) to [bend right=30] (6,0);

\draw[-] (8,0) to [bend right=30] (14,0);
\draw[-] (10,0) to [bend right=30] (16,0);
\draw[-] (8,0) to [bend right=30] (18,0);
\draw[-] (12,0) to [bend right=30] (18,0);

\draw (8,-3.5) node {$H_2$};

\end{tikzpicture}
\end{center}
\vspace{-0.6cm}
\caption{A graph $H_2\in \Gamma_2$.}\label{f2}\vspace{-0.7cm}
 \end{figure}

\vspace*{3mm}

The following result is obtained from Lemma 2.5 in \cite{RSD1}.
\begin{lemma} {\rm \cite{RSD1}} \label{pg1} Let $v_iv_j\in E(G)$ be any edge in $G$ with $d_i>d_j$. Then
   $$\frac{d_i\,d_j}{d^2_i+d^2_j}\leq \frac{\Delta\,(\Delta-1)}{\Delta^2+(\Delta-1)^2},$$
where $\Delta$ is the maximum degree in $G$. Moreover, the equality holds if and only if $d_i=\Delta$ and $d_j=\Delta-1$.
\end{lemma}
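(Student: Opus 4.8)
The plan is to exploit the fact that $\dfrac{d_i d_j}{d_i^2 + d_j^2}$ depends only on the single ratio $t = d_i/d_j$: dividing numerator and denominator by $d_i d_j$ rewrites it as $\dfrac{1}{d_i/d_j + d_j/d_i}$. Since $t + 1/t$ is strictly increasing for $t \geq 1$, the quantity $\dfrac{1}{t + 1/t}$ is strictly decreasing on $(1,\infty)$; hence to bound it from above it suffices to bound the ratio $d_i/d_j$ from below. This reduces the whole problem to an elementary ratio estimate driven by integrality of the degrees.

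First I would establish the lower bound on the ratio. Because $d_i$ and $d_j$ are positive integers with $d_i > d_j$, we have $d_i \geq d_j + 1$, and since $d_i \leq \Delta$ forces $d_j \leq \Delta - 1$, it follows that
$$\frac{d_i}{d_j} \geq \frac{d_j + 1}{d_j} = 1 + \frac{1}{d_j} \geq 1 + \frac{1}{\Delta - 1} = \frac{\Delta}{\Delta - 1}.$$
Next, mirroring the square-root manipulation used in Lemma \ref{1a2}, I would take square roots and subtract. As $d_i/d_j \geq \Delta/(\Delta-1) > 1$, both sides of
$$\sqrt{\frac{d_i}{d_j}} - \sqrt{\frac{d_j}{d_i}} \geq \sqrt{\frac{\Delta}{\Delta-1}} - \sqrt{\frac{\Delta-1}{\Delta}}$$
are nonnegative, so squaring preserves the inequality. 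Expanding each square gives $\dfrac{d_i}{d_j} + \dfrac{d_j}{d_i} - 2$ on the left and the analogous expression on the right, whence
$$\frac{d_i}{d_j} + \frac{d_j}{d_i} \geq \frac{\Delta}{\Delta-1} + \frac{\Delta-1}{\Delta} = \frac{\Delta^2 + (\Delta-1)^2}{\Delta(\Delta-1)}.$$
Taking reciprocals of both (positive) sides reverses the inequality and yields exactly
$$\frac{d_i d_j}{d_i^2 + d_j^2} \leq \frac{\Delta(\Delta-1)}{\Delta^2 + (\Delta-1)^2}.$$

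For the equality characterization I would trace the chain backwards. Equality throughout forces $d_i/d_j = \Delta/(\Delta-1)$, which in turn forces equality in both steps of the ratio estimate, namely $d_i = d_j + 1$ and $d_j = \Delta - 1$, giving $(d_i, d_j) = (\Delta, \Delta-1)$. The one point I would handle with care — and the only real subtlety here — is confirming uniqueness of the extremal pair: a priori any pair $(k\Delta, k(\Delta-1))$ realizes the ratio $\Delta/(\Delta-1)$, but the constraint $d_i \leq \Delta$ rules out $k \geq 2$, so $(\Delta, \Delta-1)$ is the only possibility. Conversely, substituting $(d_i,d_j)=(\Delta,\Delta-1)$ attains the bound, which completes the proof.
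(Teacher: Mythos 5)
Your proof is correct, and it uses exactly the monotonicity-of-$t+1/t$ argument (via the square-root manipulation) that the paper itself deploys in the proof of Lemma~\ref{1a2}; the paper gives no proof of Lemma~\ref{pg1}, importing it from \cite{RSD1} instead. Your handling of the equality case is sound — in fact the chain $\frac{d_i}{d_j}\geq\frac{d_j+1}{d_j}\geq\frac{\Delta}{\Delta-1}$ already forces $d_i=d_j+1$ and $d_j=\Delta-1$ directly, so the auxiliary discussion of pairs $(k\Delta,\,k(\Delta-1))$ is not needed, though it does no harm.
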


Let $\Gamma_2$ be a class of connected graphs $H=(V,E)$ with $k\,(>0)$ edges $v_iv_j\in E(H)$ with $d_i=d_j=\Delta$ or  $d_i=d_j=\Delta-1$, and $m-k\,(m>k)$ edges $v_iv_j\in E(H)$ with $d_i=\Delta,\,d_j=\Delta-1$. A graph $H_2\in \Gamma_2$ (see, Fig. \ref{f2}). We now give an upper bound on $ISDD$ of graphs in terms of various graph parameters, and characterize the extremal graphs.
\begin{theorem} \label{aa2} Let $G$ be a graph with $m$ edges and the maximum degree $\Delta$. Then
   $$ISDD(G)\leq \frac{k}{2}+\frac{\Delta\,(\Delta-1)}{\Delta^2+(\Delta-1)^2}\,(m-k),$$
where $k$ is the number of edges $v_iv_j\in E(G)$ such that $d_i=d_j$. Moreover, if $G$ is connected, then the equality holds if and only if $G$ is a regular graph or $G$ is a $(\Delta,\,\Delta-1)$-semiregular bipartite graph or $G\in \Gamma_2$.
\end{theorem}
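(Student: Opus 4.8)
The plan is to mirror the structure of the preceding lower-bound argument, partitioning $E(G)$ according to whether an edge joins vertices of equal or distinct degree. Concretely, I would set $A=\{v_iv_j\in E(G):d_i=d_j\}$ and $B=\{v_iv_j\in E(G):d_i>d_j\}$, so that $A\cup B=E(G)$, $A\cap B=\emptyset$, $|A|=k$ and $|B|=m-k$. On every edge of $A$ the summand $\frac{d_id_j}{d_i^2+d_j^2}$ equals exactly $\frac12$, while on every edge of $B$ Lemma \ref{pg1} bounds it above by $\frac{\Delta(\Delta-1)}{\Delta^2+(\Delta-1)^2}$. Adding the two contributions immediately yields the claimed inequality, and the equality discussion reduces to understanding when every edge of $B$ attains the bound of Lemma \ref{pg1}, that is, when $(d_i,d_j)=(\Delta,\Delta-1)$ for each such edge.

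For the equality characterization (assuming $G$ connected) I would split on the size of $B$. If $B=\emptyset$ (so $m=k$), then every edge joins two vertices of equal degree; propagating this equality along the edges of the connected graph forces all degrees to coincide, so $G$ is regular. If $A=\emptyset$ (so $k=0$), then every edge has endpoint-degrees $(\Delta,\Delta-1)$, so the degree-$\Delta$ vertices and the degree-$(\Delta-1)$ vertices form the two parts of a bipartition with no edge inside either part, giving a $(\Delta,\Delta-1)$-semiregular bipartite graph. The remaining case $0<k<m$ is the one that should land in $\Gamma_2$.

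The main obstacle is precisely this last case: I must rule out the existence of a vertex $v$ of degree $d_v\leq\Delta-2$. The key observation is that, under equality, every edge of $B$ has both endpoints of degree in $\{\Delta-1,\Delta\}$; hence if $d_v\leq\Delta-2$ then no edge at $v$ can lie in $B$, so each edge at $v$ is in $A$ and every neighbour of $v$ shares the degree $d_v$. Connectivity then propagates the degree $d_v\leq\Delta-2$ to all of $G$, contradicting the existence (forced by $B\neq\emptyset$) of a vertex of degree $\Delta$. Once every vertex is pinned to degree $\Delta$ or $\Delta-1$, the edges of $A$ automatically satisfy $d_i=d_j=\Delta$ or $d_i=d_j=\Delta-1$ and those of $B$ satisfy $(d_i,d_j)=(\Delta,\Delta-1)$, which is exactly the defining condition of $\Gamma_2$.

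For the converse I would simply evaluate $ISDD(G)$ on each of the three extremal families: a regular graph gives $k=m$ and $ISDD(G)=\frac{m}{2}$; a $(\Delta,\Delta-1)$-semiregular bipartite graph gives $k=0$ and $ISDD(G)=\frac{\Delta(\Delta-1)}{\Delta^2+(\Delta-1)^2}\,m$; and any $G\in\Gamma_2$ contributes $\frac12$ on each of its $k$ equal-degree edges and $\frac{\Delta(\Delta-1)}{\Delta^2+(\Delta-1)^2}$ on each of its $m-k$ remaining edges. In all three cases the total matches the right-hand side, which completes the equality analysis.
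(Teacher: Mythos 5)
Your proposal is correct and follows essentially the same route as the paper: the same partition of $E(G)$ into equal-degree and unequal-degree edges, the same application of Lemma \ref{pg1} to the latter class, and the same three-way case split ($m=k$, $k=0$, $0<k<m$) in the equality analysis. If anything, your propagation argument ruling out a vertex of degree at most $\Delta-2$ in the case $0<k<m$ is more explicit than the paper's, which asserts membership in $\Gamma_2$ at that point without spelling out why every equal-degree edge must have both endpoints of degree $\Delta$ or $\Delta-1$.
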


\begin{proof} We divide the edges $E(G)$ in the following two classes:
\begin{align*}
C=\Big\{v_iv_j\in E(G):\,d_i=d_j\Big\}~\mbox{ and }~D=\Big\{v_iv_j\in E(G):\,d_i\neq d_j\Big\},
\end{align*}
that is, $C\cup D=E(G)$ and $C\cap D=\emptyset$. Since $m$ is the number of edges in $G$, we have $|C|+|D|=m$. One can easily see that $|C|=k\geq 0$ and $|D|\geq 0$. Since $|C|+|D|=m$, we have $|D|=m-k$. We have
  $$\sum\limits_{v_iv_j\in E(G),\atop d_i=d_j}\,\frac{d_i\,d_j}{d^2_i+d^2_j}=\frac{k}{2}.$$
From Lemma \ref{pg1}, we have
\begin{equation}
\sum\limits_{v_iv_j\in E(G),\atop d_i\neq d_j}\,\frac{d_i\,d_j}{d^2_i+d^2_j}\leq \frac{\Delta\,(\Delta-1)}{\Delta^2+(\Delta-1)^2}\,(m-k)\label{p1w1}
\end{equation}
with equality if and only if $(d_i,d_j)=(\Delta,\,\Delta-1)$ for any edge $v_iv_j\in E(G)$ with $d_i\neq d_j$. Now,
\begin{align*}
ISDD(G)=\sum\limits_{v_iv_j\in E(G)}\,\frac{d_i\,d_j}{d^2_i+d^2_j}&=\sum\limits_{v_iv_j\in E(G),\atop d_i=d_j}\,\frac{d_i\,d_j}{d^2_i+d^2_j}+\sum\limits_{v_iv_j\in E(G),\atop d_i\neq d_j}\,\frac{d_i\,d_j}{d^2_i+d^2_j}\\[3mm]
&\leq \frac{k}{2}+\frac{\Delta\,(\Delta-1)}{\Delta^2+(\Delta-1)^2}\,(m-k).
\end{align*}
The first part of the proof is done.

\vspace*{3mm}

Suppose that equality holds. Then the equality holds in (\ref{p1w1}). From the equality in (\ref{p1w1}), we have $(d_i,d_j)=(\Delta,\,\Delta-1)$ for any edge $v_iv_j\in E(G)$ with $d_i\neq d_j$. Thus we have $k$ edges $v_iv_j\in E(G)$ with $d_i=d_j$, and the remaining $m-k$ edges $v_iv_j\in E(G)$ with $(d_i,\,d_j)=(\Delta,\,\Delta-1)$. We have $m\geq k\geq 0$. If $m=k$, then $G$ is a regular graph as $G$ is connected. Otherwise, $m>k\geq 0$. First we assume that $k=0$. Then there are any edge $v_iv_j\in E(G)$ with $(d_i,\,d_j)=(\Delta,\,\Delta-1)$. Therefore, the vertices of degree $\Delta$ is adjacent to the vertices of degree $\Delta-1$ and the vertices of degree $\Delta-1$ is adjacent to the vertices of degree $\Delta$. Since $G$ is connected, one can easily see that $V(G)=U\cup W$ with $U\cap W=\emptyset$ such that $v_iv_j\in E(G)$ with $v_i\in U$, $v_j\in W$, where $U=\{v_k\in V(G):\,d_k=\Delta\}$ and $W=\{v_k\in V(G):\,d_k=\Delta-1\}$. Hence $G$ is a $(\Delta,\,\Delta-1)$-semiregular bipartite graph.

\vspace*{3mm}

Next we assume that $m>k>0$. Since $G$ is connected and $m>k>0$, then $m-k\,(>0)$ edges $v_iv_j\in E(G)$ with $(d_i,\,d_j)=(\Delta,\,\Delta-1)$ and the remaining $k\,(>0)$ edges $v_iv_j\in E(G)$ such that $d_i=d_j=\Delta$ or  $d_i=d_j=\Delta-1$. Hence $G\in \Gamma_2$.

\vspace*{3mm}

Conversely, let $G$ be a regular graph. Then $m=k$ and $\displaystyle{\frac{d_i\,d_j}{d^2_i+d^2_j}}=\frac{1}{2}$ for any edge $v_iv_j\in E(G)$, and hence
  $$ISDD(G)=\frac{m}{2}=\frac{k}{2}+\frac{\Delta\,(\Delta-1)}{\Delta^2+(\Delta-1)^2}\,(m-k).$$

\vspace*{3mm}

Let $G$ be a $(\Delta,\,\Delta-1)$-semiregular bipartite graph. Then $k=0$ and for any edge $v_iv_j\in E(G)$, we have $(d_i,d_j)=(\Delta,\,\Delta-1)$, that is, $\displaystyle{\frac{d_i\,d_j}{d^2_i+d^2_j}}=\frac{\Delta\,(\Delta-1)}{\Delta^2+(\Delta-1)^2}$, and hence
 $$ISDD(G)=\displaystyle{\frac{\Delta\,(\Delta-1)}{\Delta^2+(\Delta-1)^2}}\,m=\frac{k}{2}+\frac{\Delta\,(\Delta-1)}{\Delta^2+(\Delta-1)^2}\,(m-k).$$

\vspace*{3mm}

Let $G\in \Gamma_2$. Then $k>0$ edges $v_iv_j\in E(G)$ with $d_i=d_j=\Delta$ or $d_i=d_j=\Delta-1$, and $m-k\,(m>k)$ edges $v_iv_j\in E(G)$ with $(d_i,\,d_j)=(\Delta,\,\Delta-1)$. Thus we have
   $$\sum\limits_{v_iv_j\in E(G),\atop d_i=d_j}\,\frac{d_i\,d_j}{d^2_i+d^2_j}=\frac{k}{2}~\mbox{ and }~\sum\limits_{v_iv_j\in E(G),\atop d_i\neq d_j}\,\frac{d_i\,d_j}{d^2_i+d^2_j}=\frac{\Delta\,(\Delta-1)}{\Delta^2+(\Delta-1)^2}\,(m-k).$$
Hence
$$ISDD(G)=\sum\limits_{v_iv_j\in E(G),\atop d_i=d_j}\,\frac{d_i\,d_j}{d^2_i+d^2_j}+\sum\limits_{v_iv_j\in E(G),\atop d_i\neq d_j}\,\frac{d_i\,d_j}{d^2_i+d^2_j}=\frac{k}{2}+\frac{\Delta\,(\Delta-1)}{\Delta^2+(\Delta-1)^2}\,(m-k).$$
\end{proof}

\begin{corollary} \label{p1a2} Let $G$ be a graph of order $n$ with maximum degree $\Delta$. Then
\begin{equation}
ISDD(G)\leq \frac{k}{2}+\frac{\Delta\,(\Delta-1)}{\Delta^2+(\Delta-1)^2}\,\left(\frac{n\,\Delta}{2}-k\right),\label{g2}
\end{equation}
where $k$ is the number of edges $v_iv_j\in E(G)$ such that $d_i=d_j$. Moreover, if $G$ is connected, then the equality holds in (\ref{g2}) if and only if $G$ is a regular graph.
\end{corollary}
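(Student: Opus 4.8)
The plan is to derive this bound directly from the upper estimate of the preceding theorem (Theorem \ref{aa2}) by inserting the elementary degree-sum bound on the edge count $m$. First I would invoke the handshaking identity $2m = \sum_{v_i\in V(G)} d_i$; since $d_i \le \Delta$ for every vertex, this gives $2m \le n\Delta$, that is, $m \le \frac{n\Delta}{2}$.

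Next I would note that the coefficient $\frac{\Delta(\Delta-1)}{\Delta^2+(\Delta-1)^2}$ is nonnegative, and strictly positive once $\Delta \ge 2$ (the cases $\Delta \in \{0,1\}$ are trivial, $G$ being then edgeless or a disjoint union of edges). Consequently, enlarging $m$ to $\frac{n\Delta}{2}$ in the term $\frac{\Delta(\Delta-1)}{\Delta^2+(\Delta-1)^2}(m-k)$ can only increase it, and chaining this with the bound of Theorem \ref{aa2} yields
$$ISDD(G) \le \frac{k}{2} + \frac{\Delta(\Delta-1)}{\Delta^2+(\Delta-1)^2}(m-k) \le \frac{k}{2} + \frac{\Delta(\Delta-1)}{\Delta^2+(\Delta-1)^2}\left(\frac{n\Delta}{2}-k\right),$$
which is exactly (\ref{g2}).

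The part requiring the most care is the equality discussion, since equality in the corollary demands that both chained inequalities be tight simultaneously. Equality in $m \le \frac{n\Delta}{2}$ forces $d_i = \Delta$ for all $i$, so $G$ must be $\Delta$-regular; this is the binding constraint. I would then reconcile this with the three equality cases furnished by Theorem \ref{aa2}: a $(\Delta,\Delta-1)$-semiregular bipartite graph and any graph in $\Gamma_2$ each contain a vertex of degree $\Delta-1\neq\Delta$, whence $\sum_{v_i\in V(G)} d_i < n\Delta$ and so $m < \frac{n\Delta}{2}$ strictly; thus these two cases cannot produce equality here. A connected $\Delta$-regular graph, on the other hand, attains equality in Theorem \ref{aa2} and satisfies $m = \frac{n\Delta}{2}$, making both inequalities tight. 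Hence, for connected $G$, equality holds if and only if $G$ is regular, which is the only mildly delicate point of the argument.
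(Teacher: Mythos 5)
Your proposal is correct and follows essentially the same route as the paper: bound $2m\leq n\Delta$ via the handshaking lemma, substitute into the upper bound of Theorem \ref{aa2}, and observe that tightness of $2m\leq n\Delta$ forces regularity, which singles out the regular case among the three equality cases of that theorem. Your extra check that the semiregular bipartite and $\Gamma_2$ cases fail to attain $2m=n\Delta$ is a welcome bit of added care, but the argument is the paper's own.
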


\begin{proof} Since $2m\leq n\,\Delta$, from Theorem \ref{aa2}, we get the result. We have $2m=n\,\Delta$ if and only if $G$ is a regular graph. Moreover, if $G$ is connected, then the equality holds in (\ref{g2}) if and only if $G$ is a regular graph, by Theorem \ref{aa2}.
\end{proof}

\section{Relation between $ISDD$ index with some topological indices of graphs}

In this section we obtain some relations between $ISDD$ index with some other topological indices of graphs. First we give a well-known relation between $ISDD$ and $GA$ of graphs in the following:
\begin{lemma}{\rm \cite{M1}} \label{1e1} Let $G$ be graph with $m$ edges. Then $ISDD(G)\geq \frac{GA^2(G)}{4m}$.
\end{lemma}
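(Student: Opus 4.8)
The plan is to deduce the bound from the Cauchy--Schwarz inequality together with the elementary estimate $(d_i+d_j)^2\ge d_i^2+d_j^2$, valid for all $d_i,d_j>0$ (equivalently $2d_id_j\ge 0$).

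First I would apply Cauchy--Schwarz in the form $\bigl(\sum_{k=1}^m x_k\bigr)^2\le m\sum_{k=1}^m x_k^2$ to the $m$ summands of $GA(G)$, taking $x_k=\frac{2\sqrt{d_id_j}}{d_i+d_j}$ for the $k$-th edge $v_iv_j$. This gives
$$GA(G)^2\le m\sum_{v_iv_j\in E(G)}\frac{4\,d_id_j}{(d_i+d_j)^2}.$$
The next step is to compare, edge by edge, the summand $\frac{4\,d_id_j}{(d_i+d_j)^2}$ with $\frac{4\,d_id_j}{d_i^2+d_j^2}$. Since $(d_i+d_j)^2=d_i^2+d_j^2+2d_id_j\ge d_i^2+d_j^2$, we have $\frac{4\,d_id_j}{(d_i+d_j)^2}\le\frac{4\,d_id_j}{d_i^2+d_j^2}$ for every edge. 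Substituting and factoring out the constant yields
$$GA(G)^2\le 4m\sum_{v_iv_j\in E(G)}\frac{d_id_j}{d_i^2+d_j^2}=4m\,ISDD(G),$$
which rearranges to the claimed inequality $ISDD(G)\ge\frac{GA^2(G)}{4m}$.

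There is no serious obstacle here: the only real choice is which form of Cauchy--Schwarz to invoke. An equivalent route would be to split each $GA$-summand as $\frac{2\sqrt{d_id_j}}{d_i+d_j}=\sqrt{\frac{d_id_j}{d_i^2+d_j^2}}\cdot\frac{2\sqrt{d_i^2+d_j^2}}{d_i+d_j}$ and apply the two-sequence Cauchy--Schwarz, which produces $ISDD(G)$ directly as one factor and leaves a second factor bounded above by $4m$ via the same inequality $(d_i+d_j)^2\ge d_i^2+d_j^2$. Either way the engine is the same elementary estimate, so I would present whichever keeps the bookkeeping lightest.
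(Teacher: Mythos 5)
Your argument is correct. The paper does not actually prove this lemma --- it is quoted from \cite{M1} without proof --- so there is no in-paper argument to match against; your self-contained derivation via Cauchy--Schwarz, $GA(G)^2\le m\sum_{v_iv_j\in E(G)}\frac{4d_id_j}{(d_i+d_j)^2}$, followed by the termwise bound $(d_i+d_j)^2\ge d_i^2+d_j^2$, is a clean and complete proof. It is worth noting that your second, ``equivalent route'' (splitting each $GA$-summand so that Cauchy--Schwarz produces $ISDD(G)$ as one factor and $\sum_{v_iv_j\in E(G)}\frac{4(d_i^2+d_j^2)}{(d_i+d_j)^2}$ as the other) is precisely the mechanism the paper uses in the theorem that follows this lemma: there the authors invoke Radon's inequality with $x_k=\frac{\sqrt{d_id_j}}{d_i+d_j}$ and $a_k=\frac{d_i^2+d_j^2}{(d_i+d_j)^2}$ to get $ISDD(G)\ge\frac{GA^2(G)}{4A}$ with $A=\sum_{v_iv_j\in E(G)}\frac{d_i^2+d_j^2}{(d_i+d_j)^2}\le m$, and then sharpen the bound on $A$ to $m-\frac{M_2(G)}{2\Delta^2}$. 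So your first route proves exactly the stated lemma with minimal bookkeeping, while your second route is the natural starting point if one wants the paper's stronger refinement; either is acceptable here.
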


We now mention an inequality that will be used to produce the main result in this section.
 \begin{lemma}{\rm \cite{RA}} {\rm (Radon's inequality)} \label{k1} If $a_k,\,x_k>0$, $p>0$, $k\in\{1,\,2,\ldots,\,r\}$, then the following inequality holds:
          $$\sum\limits^r_{k=1}\,\frac{x^{p+1}_k}{a^p_k}\geq \frac{\left(\sum\limits^r_{k=1}\,x_k\right)^{p+1}}{\left(\sum\limits^r_{k=1}\,a_k\right)^p}$$
 with equality if and only if
 $\frac{x_1}{a_1}=\frac{x_2}{a_2}=\cdots=\frac{x_r}{a_r}$.
 \end{lemma}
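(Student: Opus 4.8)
The plan is to obtain Radon's inequality as a one-line consequence of Jensen's inequality applied to the function $\phi(t)=t^{p+1}$ on $(0,\infty)$. Because $p>0$ we have $p+1>1$, and $\phi''(t)=p(p+1)\,t^{p-1}>0$ for all $t>0$, so $\phi$ is strictly convex; this single fact drives both the inequality and its equality case.

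First I would set $A=\sum_{k=1}^r a_k$ and $X=\sum_{k=1}^r x_k$, introduce the weights $w_k=a_k/A$ (positive, with $\sum_{k=1}^r w_k=1$) and the ratios $t_k=x_k/a_k>0$. The key algebraic step is the factorization $x_k^{p+1}/a_k^p=a_k\,(x_k/a_k)^{p+1}=a_k\,\phi(t_k)$, which recasts the left-hand side as a weighted sum of values of $\phi$:
$$\sum_{k=1}^r \frac{x_k^{p+1}}{a_k^p}=A\sum_{k=1}^r w_k\,\phi(t_k).$$
Next I would apply Jensen's inequality, $\sum_{k=1}^r w_k\,\phi(t_k)\ge \phi\!\left(\sum_{k=1}^r w_k t_k\right)$, and evaluate the barycenter $\sum_{k=1}^r w_k t_k=\frac{1}{A}\sum_{k=1}^r x_k=\frac{X}{A}$. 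Combining the two displays gives
$$\sum_{k=1}^r \frac{x_k^{p+1}}{a_k^p}\ge A\,\phi\!\left(\frac{X}{A}\right)=A\left(\frac{X}{A}\right)^{p+1}=\frac{X^{p+1}}{A^{p}},$$
which is exactly the asserted bound. Since every weight $w_k$ is strictly positive and $\phi$ is strictly convex, Jensen is an equality precisely when all the $t_k$ coincide, that is, when $x_1/a_1=x_2/a_2=\cdots=x_r/a_r$, matching the stated equality condition.

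I do not expect a genuine obstacle: the entire argument hinges on spotting the factorization $x_k^{p+1}/a_k^p=a_k\,\phi(x_k/a_k)$, after which weighted Jensen does all the work, and the only points requiring care are verifying strict convexity (needed only for the equality clause) and the elementary computation of the weighted mean of the $t_k$. Should one wish to avoid Jensen, the same estimate follows from H\"older's inequality with the conjugate exponents $p+1$ and $(p+1)/p$ applied to the splitting $x_k=\bigl(x_k\,a_k^{-p/(p+1)}\bigr)\,a_k^{p/(p+1)}$, and it can equally be proved by induction on $r$ using the two-term inequality $\frac{x_1^{p+1}}{a_1^p}+\frac{x_2^{p+1}}{a_2^p}\ge \frac{(x_1+x_2)^{p+1}}{(a_1+a_2)^p}$ as the inductive step; all three routes produce the identical equality condition.
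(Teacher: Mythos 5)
Your proof is correct: the factorization $x_k^{p+1}/a_k^p=a_k\,\phi(x_k/a_k)$ with $\phi(t)=t^{p+1}$, the weights $w_k=a_k/A$, and weighted Jensen give exactly the stated bound, and strict convexity of $\phi$ (from $p>0$) together with the strict positivity of the weights yields the equality condition $x_1/a_1=\cdots=x_r/a_r$. Note that the paper does not prove this lemma at all --- it is quoted as a classical result with a citation to Radon --- so there is no in-paper argument to compare against; your weighted-Jensen derivation is the standard one and is complete as written.
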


We now obtain a relation between $ISDD$, $GA$ and $M_2$ of graphs.
\begin{theorem} Let $G$ be graph with $m$ edges and maximum degree $\Delta$. Then
\begin{equation}
ISDD(G)\geq \displaystyle{\frac{\Delta^2\,GA^2(G)}{4m\,\Delta^2-2\,M_2(G)}}\label{1e0}
\end{equation}
with equality if and only if $G$ is a regular graph.
\end{theorem}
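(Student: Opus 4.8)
The plan is to apply Radon's inequality (Lemma \ref{k1}) with $p=1$ after a substitution tailored so that the left-hand side of Radon reproduces $ISDD(G)$ and the numerator produces $GA^2(G)$, and then to control the resulting denominator by a single edge-wise estimate that is exactly where the bound $d_i,d_j\le\Delta$ enters.

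First I would set, for each edge $v_iv_j\in E(G)$,
$$x_k=\frac{2\sqrt{d_id_j}}{d_i+d_j}\qquad\text{and}\qquad a_k=\frac{4\,(d^2_i+d^2_j)}{(d_i+d_j)^2}.$$
These are chosen precisely so that $\sum_k x_k=GA(G)$ while $\dfrac{x^2_k}{a_k}=\dfrac{d_id_j}{d^2_i+d^2_j}$, whence $\sum_k \dfrac{x^2_k}{a_k}=ISDD(G)$. Radon's inequality with $p=1$ then gives at once
$$ISDD(G)=\sum_k\frac{x^2_k}{a_k}\geq\frac{\left(\sum_k x_k\right)^2}{\sum_k a_k}=\frac{GA^2(G)}{\sum_k a_k}.$$

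The remaining task is to show $\sum_k a_k\le \dfrac{4m\Delta^2-2M_2(G)}{\Delta^2}$, which substituted into the display above yields (\ref{1e0}) immediately. I would prove this edge-by-edge. Writing $d^2_i+d^2_j=(d_i+d_j)^2-2d_id_j$ gives $a_k=4-\dfrac{8d_id_j}{(d_i+d_j)^2}$, so the target $a_k\le 4-\dfrac{2d_id_j}{\Delta^2}$ reduces to $\dfrac{4}{(d_i+d_j)^2}\ge\dfrac{1}{\Delta^2}$, i.e.\ $d_i+d_j\le 2\Delta$, which is immediate since $d_i\le\Delta$ and $d_j\le\Delta$. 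Summing over the $m$ edges yields $\sum_k a_k\le 4m-\dfrac{2M_2(G)}{\Delta^2}=\dfrac{4m\Delta^2-2M_2(G)}{\Delta^2}$, completing the inequality.

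For equality I would track both steps. The edge-wise estimate is tight only when $d_i+d_j=2\Delta$ on every edge, forcing $d_i=d_j=\Delta$ for all edges; the equality condition in Radon's inequality (all ratios $x_k/a_k$ equal) is then automatic, since $x_k/a_k=\tfrac12$ once every edge joins two vertices of degree $\Delta$. For a connected $G$ this means $G$ is $\Delta$-regular. The converse is a direct check: an $r$-regular graph has $ISDD(G)=m/2$, $GA(G)=m$, and $M_2(G)=m\Delta^2$, and the right-hand side of (\ref{1e0}) collapses to $\dfrac{\Delta^2 m^2}{4m\Delta^2-2m\Delta^2}=\dfrac{m}{2}$ as well. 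The only genuinely nonroutine point is guessing the weights $a_k$ that simultaneously recover $ISDD(G)$ on the left and $GA(G)$ through $\sum_k x_k$; once that substitution is fixed, the remaining estimate is elementary, and the maximum-degree hypothesis is used solely in the step $d_i+d_j\le 2\Delta$.
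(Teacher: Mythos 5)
Your proposal is correct and follows essentially the same route as the paper: Radon's inequality with $p=1$ applied to the same substitution up to a harmless rescaling (the paper takes $x_k=\sqrt{d_id_j}/(d_i+d_j)$ and $a_k=(d_i^2+d_j^2)/(d_i+d_j)^2$), followed by the same identity $d_i^2+d_j^2=(d_i+d_j)^2-2d_id_j$ and the same bound $(d_i+d_j)^2\le 4\Delta^2$ to control the denominator by $4m\Delta^2-2M_2(G)$ over $\Delta^2$. The equality analysis also matches, with your observation that tightness of the edge-wise estimate forces $d_i=d_j=\Delta$ on every edge (making the Radon equality automatic) being a slightly cleaner version of the paper's argument.
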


\begin{proof} Setting $x_k=\displaystyle{\frac{\sqrt{d_id_j}}{d_i+d_j}}$ and $a_k=\displaystyle{\frac{d^2_i+d^2_j}{(d_i+d_j)^2}}$ in Lemma \ref{k1}, we obtain
\begin{eqnarray*}
\frac{\left(\sum\limits_{v_iv_j\in E(G)}\,\displaystyle{\frac{\sqrt{d_id_j}}{d_i+d_j}}\right)^2}{\sum\limits_{v_iv_j\in E(G)}\,\displaystyle{\frac{d^2_i+d^2_j}{(d_i+d_j)^2}}}\leq \sum\limits_{v_iv_j\in E(G)}\,\displaystyle{\frac{d_i\,d_j}{d^2_i+d^2_j}},
\end{eqnarray*}
that is,
\begin{equation}
ISDD(G)\geq \frac{GA^2(G)}{4\,A},~~\mbox{ where }~A=\sum\limits_{v_iv_j\in E(G)}\,\displaystyle{\frac{d^2_i+d^2_j}{(d_i+d_j)^2}}.\label{1e2}
\end{equation}
By Lemma \ref{k1}, the equality holds if and only if
\begin{equation}
\frac{\sqrt{d_i\,d_j}\,(d_i+d_j)}{d^2_i+d^2_j}=\frac{\sqrt{d_i\,d_k}\,(d_i+d_k)}{d^2_i+d^2_k}~~\mbox{ for each edge }v_iv_j,\,v_iv_k\in E(G).\label{1e3}
\end{equation}
Since $d_i\leq \Delta$ for all $v_i\in V(G)$, we obtain
\begin{equation}
\sum\limits_{v_iv_j\in E(G)}\,\displaystyle{\frac{2d_i\,d_j}{(d_i+d_j)^2}}\geq \frac{1}{2\,\Delta^2}\,\sum\limits_{v_iv_j\in E(G)}\,d_i\,d_j=\frac{1}{2\,\Delta^2}\,M_2(G)\label{1e4}
\end{equation}
with equality if and only if $d_i=\Delta$ for all $v_i\in V(G)$, that is, if and only if $G$ is a regular graph. Using the above result, we obtain
\begin{align*}
A=\sum\limits_{v_iv_j\in E(G)}\,\displaystyle{\frac{d^2_i+d^2_j}{(d_i+d_j)^2}}&=\sum\limits_{v_iv_j\in E(G)}\,\left[1-\displaystyle{\frac{2d_i\,d_j}{(d_i+d_j)^2}}\right]\\[3mm]
&=m-\sum\limits_{v_iv_j\in E(G)}\,\displaystyle{\frac{2d_i\,d_j}{(d_i+d_j)^2}}\leq m-\frac{1}{2\,\Delta^2}\,M_2(G).
\end{align*}
Using this in (\ref{1e2}), we obtain
   $$ISDD(G)\geq \frac{GA^2(G)}{4\,A}\geq \frac{GA^2(G)}{4\,m-\frac{2}{\Delta^2}\,M_2(G)}.$$
The first part of the proof is done.

\vspace*{3mm}

Suppose that equality holds. Then the equality holds in (\ref{1e4}). Thus $G$ is a regular graph.

\vspace*{3mm}

Conversely, let $G$ be a $r$-regular graph. Then we have $\Delta=r$, $ISDD(G)=\frac{1}{2}\,m$, $GA(G)=m$ and $M_2(G)=m\,r^2$. Therefore the equality holds in (\ref{1e0}).
\end{proof}

\begin{remark} \label{1e5} Our result in (\ref{1e0}) is always better than the result in Lemma \ref{1e1}. Since $M_2(G)\geq 0$, we obtain
$$ISDD(G)\geq \displaystyle{\frac{\Delta^2\,GA^2(G)}{4m\,\Delta^2-2\,M_2(G)}}\geq \displaystyle{\frac{GA^2(G)}{4m}}.$$
\end{remark}

\begin{remark} Indeed the inequality in Lemma \ref{1e1} is strict. Since $G$ is non-empty graph, $M_2(G)>0$. From the above Remark \ref{1e5}, we obtain
$$ISDD(G)\geq \displaystyle{\frac{\Delta^2\,GA^2(G)}{4m\,\Delta^2-2\,M_2(G)}}>\displaystyle{\frac{GA^2(G)}{4m}}.$$
\end{remark}

\vspace*{3mm}

\begin{figure}[ht!]
\begin{center}
\begin{tikzpicture}[scale=0.8,style=thick]
\def\vr{5pt}
\def\br{0.5pt}

\draw (9,0)-- (2,4);
\draw (9,0)-- (4,4);
\draw (9,0)-- (6,4);
\draw (9,0)-- (8,4);
\draw (9,0)--(10,4);
\draw (9,0)-- (12,4);
\draw (9,0)-- (14,4);
\draw (9,0)-- (16,4);
\draw (9,0)-- (18,4);

\draw (10,0)-- (2,4);
\draw (10,0)-- (4,4);
\draw (10,0)-- (6,4);
\draw (10,0)-- (8,4);
\draw (10,0)--(10,4);
\draw (10,0)-- (12,4);
\draw (10,0)-- (14,4);
\draw (10,0)-- (16,4);
\draw (10,0)-- (18,4);

\draw (11,0)-- (2,4);
\draw (11,0)-- (4,4);
\draw (11,0)-- (6,4);
\draw (11,0)-- (8,4);
\draw (11,0)--(10,4);
\draw (11,0)-- (12,4);
\draw (11,0)-- (14,4);
\draw (11,0)-- (16,4);
\draw (11,0)-- (18,4);

\draw (12,0)-- (2,4);
\draw (12,0)-- (4,4);
\draw (12,0)-- (6,4);
\draw (12,0)-- (8,4);
\draw (12,0)--(10,4);
\draw (12,0)-- (12,4);
\draw (12,0)-- (14,4);
\draw (12,0)-- (16,4);
\draw (12,0)-- (18,4);

\draw (13,0)-- (2,4);
\draw (13,0)-- (4,4);
\draw (13,0)-- (6,4);
\draw (13,0)-- (8,4);
\draw (13,0)--(10,4);
\draw (13,0)-- (12,4);
\draw (13,0)-- (14,4);
\draw (13,0)-- (16,4);
\draw (13,0)-- (18,4);

\draw (14,0)-- (2,4);
\draw (14,0)-- (4,4);
\draw (14,0)-- (6,4);
\draw (14,0)-- (8,4);
\draw (14,0)--(10,4);
\draw (14,0)-- (12,4);
\draw (14,0)-- (14,4);
\draw (14,0)-- (16,4);
\draw (14,0)-- (18,4);

\draw (15,0)-- (2,4);
\draw (15,0)-- (4,4);
\draw (15,0)-- (6,4);
\draw (15,0)-- (8,4);
\draw (15,0)--(10,4);
\draw (15,0)-- (12,4);
\draw (15,0)-- (14,4);
\draw (15,0)-- (16,4);
\draw (15,0)-- (18,4);

\draw (16,0)-- (2,4);
\draw (16,0)-- (4,4);
\draw (16,0)-- (6,4);
\draw (16,0)-- (8,4);
\draw (16,0)--(10,4);
\draw (16,0)-- (12,4);
\draw (16,0)-- (14,4);
\draw (16,0)-- (16,4);
\draw (16,0)-- (18,4);

\draw (17,0)-- (2,4);
\draw (17,0)-- (4,4);
\draw (17,0)-- (6,4);
\draw (17,0)-- (8,4);
\draw (17,0)--(10,4);
\draw (17,0)-- (12,4);
\draw (17,0)-- (14,4);
\draw (17,0)-- (16,4);
\draw (17,0)-- (18,4);

\draw (18,0)-- (2,4);
\draw (18,0)-- (4,4);
\draw (18,0)-- (6,4);
\draw (18,0)-- (8,4);
\draw (18,0)--(10,4);
\draw (18,0)-- (12,4);
\draw (18,0)-- (14,4);
\draw (18,0)-- (16,4);
\draw (18,0)-- (18,4);

\draw (19,0)-- (2,4);
\draw (19,0)-- (4,4);
\draw (19,0)-- (6,4);
\draw (19,0)-- (8,4);
\draw (19,0)--(10,4);
\draw (19,0)-- (12,4);
\draw (19,0)-- (14,4);
\draw (19,0)-- (16,4);
\draw (19,0)-- (18,4);

\draw (20,0)-- (2,4);
\draw (20,0)-- (4,4);
\draw (20,0)-- (6,4);
\draw (20,0)-- (8,4);
\draw (20,0)--(10,4);
\draw (20,0)-- (12,4);
\draw (20,0)-- (14,4);
\draw (20,0)-- (16,4);
\draw (20,0)-- (18,4);

\draw (0,0)-- (2,4);
\draw (0,0)-- (4,4);
\draw (0,0)-- (6,4);
\draw (0,0)-- (8,4);
\draw (0,0)--(10,4);
\draw (0,0)-- (12,4);

\draw (1,0)-- (4,4);
\draw (1,0)-- (6,4);
\draw (1,0)-- (8,4);
\draw (1,0)--(10,4);
\draw (1,0)-- (12,4);
\draw (1,0)-- (14,4);

\draw (2,0)-- (6,4);
\draw (2,0)-- (8,4);
\draw (2,0)--(10,4);
\draw (2,0)-- (12,4);
\draw (2,0)-- (14,4);
\draw (2,0)-- (16,4);

\draw (3,0)-- (8,4);
\draw (3,0)--(10,4);
\draw (3,0)-- (12,4);
\draw (3,0)-- (14,4);
\draw (3,0)-- (16,4);
\draw (3,0)-- (18,4);

\draw (4,0)--(10,4);
\draw (4,0)-- (12,4);
\draw (4,0)-- (14,4);
\draw (4,0)-- (16,4);
\draw (4,0)-- (18,4);
\draw (4,0)-- (2,4);

\draw (5,0)-- (12,4);
\draw (5,0)-- (14,4);
\draw (5,0)-- (16,4);
\draw (5,0)-- (18,4);
\draw (5,0)-- (2,4);
\draw (5,0)--(4,4);

\draw (6,0)-- (14,4);
\draw (6,0)-- (16,4);
\draw (6,0)-- (18,4);
\draw (6,0)-- (2,4);
\draw (6,0)--(4,4);
\draw (6,0)-- (6,4);

\draw (7,0)-- (16,4);
\draw (7,0)-- (18,4);
\draw (7,0)-- (2,4);
\draw (7,0)--(4,4);
\draw (7,0)-- (6,4);
\draw (7,0)-- (8,4);

\draw (8,0)-- (18,4);
\draw (8,0)-- (2,4);
\draw (8,0)--(4,4);
\draw (8,0)-- (6,4);
\draw (8,0)-- (8,4);
\draw (8,0)-- (10,4);

\draw (0,0)  [fill=black] circle (\vr);
\draw (1,0)  [fill=black] circle (\vr);
\draw (2,0)  [fill=black] circle (\vr);
\draw (3,0)  [fill=black] circle (\vr);
\draw (4,0)  [fill=black] circle (\vr);
\draw (5,0)  [fill=black] circle (\vr);
\draw (6,0)  [fill=black] circle (\vr);
\draw (7,0)  [fill=black] circle (\vr);
\draw (8,0)  [fill=black] circle (\vr);
\draw (9,0)  [fill=black] circle (\vr);
\draw (10,0)  [fill=black] circle (\vr);
\draw (11,0)  [fill=black] circle (\vr);
\draw (12,0)  [fill=black] circle (\vr);
\draw (13,0)  [fill=black] circle (\vr);
\draw (14,0)  [fill=black] circle (\vr);
\draw (15,0)  [fill=black] circle (\vr);
\draw (16,0)  [fill=black] circle (\vr);
\draw (17,0)  [fill=black] circle (\vr);
\draw (18,0)  [fill=black] circle (\vr);
\draw (19,0)  [fill=black] circle (\vr);
\draw (20,0)  [fill=black] circle (\vr);

\draw (2,4)  [fill=black] circle (\vr);
\draw (4,4)  [fill=black] circle (\vr);
\draw (6,4)  [fill=black] circle (\vr);
\draw (8,4)  [fill=black] circle (\vr);
\draw (10,4)  [fill=black] circle (\vr);
\draw (12,4)  [fill=black] circle (\vr);
\draw (14,4)  [fill=black] circle (\vr);
\draw (16,4)  [fill=black] circle (\vr);
\draw (18,4)  [fill=black] circle (\vr);

\draw (0,-0.5) node {$v_1$};
\draw (1,-0.5) node {$v_2$};
\draw (2,-0.5) node {$v_3$};
\draw (3,-0.5) node {$v_4$};
\draw (4,-0.5) node {$v_5$};
\draw (5,-0.5) node {$v_6$};
\draw (6,-0.5) node {$v_7$};
\draw (7,-0.5) node {$v_8$};
\draw (8,-0.5) node {$v_9$};
\draw (9,-0.5) node {$v_{10}$};
\draw (10,-0.5) node {$v_{11}$};
\draw (11,-0.5) node {$v_{12}$};
\draw (12,-0.5) node {$v_{13}$};
\draw (13,-0.5) node {$v_{14}$};
\draw (14,-0.5) node {$v_{15}$};
\draw (15,-0.5) node {$v_{16}$};
\draw (16,-0.5) node {$v_{17}$};
\draw (17,-0.5) node {$v_{18}$};
\draw (18,-0.5) node {$v_{19}$};
\draw (19,-0.5) node {$v_{20}$};
\draw (20,-0.5) node {$v_{21}$};

\draw (2,4.5) node {$u_1$};
\draw (4,4.5) node {$u_2$};
\draw (6,4.5) node {$u_3$};
\draw (8,4.5) node {$u_4$};
\draw (10,4.5) node {$u_5$};
\draw (12,4.5) node {$u_6$};
\draw (14,4.5) node {$u_7$};
\draw (16,4.5) node {$u_8$};
\draw (18,4.5) node {$u_9$};

\draw (10,-2) node {$H_3$};
\end{tikzpicture}
\end{center}
\vspace{-0.1cm}
\caption{A graph $H_3\in \Gamma_3$.}\label{FIG1}\vspace{-0.2cm}
 \end{figure}

 \vspace*{10mm}

In Fig. \ref{FIG1}, $H_3=(V,E)$, where $V(H_3)=U\cup W$, $U=\{u_1,\,u_2,\,u_3,\,u_4,\,u_5,\,u_6,\,u_7,\,u_8,\,u_9\}$, $W=\{v_1,\,v_2,\,v_3,\,v_4,\,v_5,\,v_6,\,v_7,\,v_8,\,v_9,\,v_{10},\,v_{11},\,v_{12},\,v_{13},\,v_{14},\,v_{15},\,v_{16},\,v_{17},\,v_{18},\,v_{19},\,v_{20},\,v_{21}\}$, $E(H_3)=E_1\cup E_2$, $E_1=\{v_iu_j:\,1\leq i\leq 9,\,i\leq j\leq i+5\leq 9,~\mbox{ and }~1\leq j\leq i-4~\mbox{ if }~i\geq 5\}$, $E_2=\{v_iu_j:\,10\leq i\leq 21,\,1\leq j\leq 9\}$. Moreover, each vertex degree from $u_1$ to $u_9$ is $18$, each vertex degree from $v_1$ to $v_9$ is $6$ and each vertex degree from $v_{10}$ to $v_{21}$ is $9$.
Let $\Gamma_3$ be a class of connected bipartite graphs $H=(V,E)$ with bipartition $V(H)=U\cup W$ such that
$$d_i=\left\{
                                     \begin{array}{ll}
                                      \Delta  & \hbox{if $v_i\in U$,} \\
                                       \delta~\mbox{ or }~\displaystyle{\frac{\Delta\,(\Delta-\delta)}{\Delta+\delta}} & \hbox{if $v_i\in W$.}
                                     \end{array}
                                   \right.$$
A graph $H_3\in \Gamma_3$.

\begin{lemma} \label{1q1} Let $G$ be a connected graph. Then $$\displaystyle{\frac{d_i+d_j}{d^2_i+d^2_j}}=\displaystyle{\frac{d_k+d_{\ell}}{d^2_k+d^2_{\ell}}}~\mbox{ for any two edges }v_iv_j,\,v_kv_{\ell}\in E(G)$$
if and only if $G$ is a regular graph or $G$ is a semiregular bipartite graph or $G\in \Gamma_3$.
\end{lemma}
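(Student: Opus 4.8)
The plan is to prove the two implications separately, the heart of the matter being a substitution that linearizes the pointwise edge-condition. Write $f(a,b)=\frac{a+b}{a^2+b^2}$, so the hypothesis is precisely that $f(d_i,d_j)$ equals one common value $c$ on every edge. Since all degrees are positive, $c>0$, and setting $K=1/c$ the condition $f(d_i,d_j)=c$ becomes $d_i^2+d_j^2=K(d_i+d_j)$, i.e.
$$\psi(v_i)+\psi(v_j)=0 \qquad\text{for every edge }v_iv_j\in E(G),$$
where $\psi(v)=d_v^{\,2}-K\,d_v=d_v(d_v-K)$. Recognizing this reformulation is the main obstacle; once it is in hand, everything reduces to parity bookkeeping and to counting the roots of a quadratic.

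For the \emph{if} direction I would verify each family by direct substitution. If $G$ is regular then $\psi\equiv0$ and the condition holds trivially. If $G$ is semiregular bipartite, every edge joins the two fixed degrees, so $f$ is constant. For $G\in\Gamma_3$ the only thing to check is that the two admissible $W$-degrees give the same value of $f$ against $\Delta$, i.e. the identity $f(\Delta,\delta)=f(\Delta,t)$ with $t=\frac{\Delta(\Delta-\delta)}{\Delta+\delta}$. This follows because $\delta$ and $t$ are exactly the two roots of the quadratic $(\Delta+\delta)x^2-(\Delta^2+\delta^2)x+\Delta\delta(\Delta-\delta)=0$ coming from $f(\Delta,x)=f(\Delta,\delta)$, their product being $\frac{\Delta\delta(\Delta-\delta)}{\Delta+\delta}=\delta t$.

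For the \emph{only if} direction I would exploit $\psi(v_i)+\psi(v_j)=0$ globally. Along any walk $\psi$ changes sign at each step, so $\psi(v_k)=(-1)^k\psi(v_0)$. If $G$ is non-bipartite it contains a closed walk of odd length, forcing $\psi(v_0)=-\psi(v_0)$ and hence $\psi\equiv0$ by connectivity; then $d_v(d_v-K)=0$ gives $d_v=K$ for all $v$, so $G$ is regular. If $G$ is bipartite with parts $X,Y$, then connectivity together with the even length of within-part walks yields a constant $\alpha$ with $\psi\equiv\alpha$ on $X$ and $\psi\equiv-\alpha$ on $Y$; the case $\alpha=0$ again gives a regular graph.

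It remains to analyze $\alpha\neq0$, which I would do by root-counting, and this is the step needing the most care. Relabel so that $\alpha>0$ and let $U$ be the part carrying $\psi\equiv\alpha$. On $U$ the degrees are roots of $x^2-Kx-\alpha=0$, whose product $-\alpha$ is negative, so exactly one root is positive; hence all vertices of $U$ share one degree, and since $\psi>0$ on $U$ while $\psi<0$ on $W$ forces $d_U>K>d_W$, this common degree is the maximum degree $\Delta$. On $W$ the degrees are the (at most two) positive roots of $x^2-Kx+\alpha=0$. If only one value occurs, $G$ is $(\Delta,\cdot)$-semiregular bipartite. If two occur, I would pick an edge joining $\Delta$ to the minimum degree $\delta$ (which exists by connectivity) to read off $K=\frac{\Delta^2+\delta^2}{\Delta+\delta}$; the second $W$-degree is then $K-\delta=\frac{\Delta(\Delta-\delta)}{\Delta+\delta}=t$, so $G\in\Gamma_3$. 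This exhausts all cases and matches the three claimed families.
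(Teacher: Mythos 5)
Your proof is correct, and it takes a genuinely different route from the paper's. The paper works purely locally: for two edges $v_iv_j,\,v_iv_k$ sharing a vertex it derives $(d_j-d_k)\bigl[d_i^2-d_i(d_j+d_k)-d_jd_k\bigr]=0$, and then runs a case analysis on whether the set $E'$ of edges joining two maximum-degree vertices is empty, propagating degree information neighbourhood by neighbourhood to reach the three families. You instead globalize at the outset: introducing $K=1/c$ and the potential $\psi(v)=d_v(d_v-K)$ turns the edge condition into the additive relation $\psi(v_i)+\psi(v_j)=0$, so sign alternation along walks immediately yields the dichotomy ``$\psi\equiv 0$ (regular) or $G$ bipartite with $\psi\equiv\pm\alpha$ on the two parts,'' and the quadratics $x^2-Kx-\alpha=0$ and $x^2-Kx+\alpha=0$ then transparently explain why one side carries a single degree and the other at most two. (Your local identity is of course equivalent to the paper's: $\psi(v_j)=\psi(v_k)$ factors as $(d_j-d_k)(d_j+d_k-K)=0$.) What your approach buys is a cleaner and more rigorous treatment of the steps the paper handles somewhat informally (``by a similar argument, one can easily see\dots''), in particular the bipartiteness claim and the bound on the number of distinct degrees per side; the paper's approach has the minor advantage of never introducing the auxiliary constant $K$. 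All the computational checks in your argument (the quadratic $(\Delta+\delta)x^2-(\Delta^2+\delta^2)x+\Delta\delta(\Delta-\delta)=0$ having roots $\delta$ and $\tfrac{\Delta(\Delta-\delta)}{\Delta+\delta}$, the identification $K=\tfrac{\Delta^2+\delta^2}{\Delta+\delta}$ and the second $W$-degree $K-\delta$) are verified correctly, and the existence of a $(\Delta,\delta)$-edge follows as you say from every minimum-degree vertex lying in $W$ with all neighbours in $U$.
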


\begin{proof} First we assume that
$$\displaystyle{\frac{d_i+d_j}{d^2_i+d^2_j}}=\displaystyle{\frac{d_k+d_{\ell}}{d^2_k+d^2_{\ell}}}~\mbox{ for any two edges }v_iv_j,\,v_kv_{\ell}\in E(G).$$
Since $G$ is a connected graph, for any edges $v_iv_j,\,v_iv_k\in E(G)$, we obtain
 $$\displaystyle{\frac{d_i+d_j}{d^2_i+d^2_j}}=\displaystyle{\frac{d_i+d_k}{d^2_i+d^2_k}},$$
that is,
 $$(d_i+d_j)\,(d^2_i+d^2_k)=(d_i+d_k)\,(d^2_i+d^2_j),$$
that is,
 $$(d_j-d_k)\,\Big[d^2_i-d_i\,(d_j+d_k)-d_jd_k\Big]=0,$$
that is,
\begin{equation}
\mbox{either $d_j=d_k$ or $d_i=d_j+d_k+\frac{d_jd_k}{d_i}$}.\label{1w1}
\end{equation}

Let $U=\{v_i\in V(G)\,:\,d_i=\Delta\}$, where $\Delta$ is the maximum degree in $G$. Then $|U|\geq 1$. Also let $E^{\prime}=\{v_iv_j\in E(G):\,v_i,\,v_j\in U\}$. Then we have $E^{\prime}\subseteq E(G)$ and $|E^{\prime}|\geq 0$. We consider the following cases:

\vspace*{3mm}

\noindent
${\bf Case\,1.}$ $|E^{\prime}|\geq 1$. Let $v_iv_j$ be an edge in $E^{\prime}$. Then $d_i=d_j=\Delta$. By (\ref{1w1}), we conclude that all the vertices in $N_G(v_i)$ $(\mbox{and }N_G(v_j))$ are of degree $\Delta$ as $d_i\neq d_j+d_k+\frac{d_jd_k}{d_i}$. Thus we have $N_G(v_i)\subseteq U$ and $N_G(v_j)\subseteq U$. By a similar argument, we prove that $N_G(v_k)\subseteq U$, where $v_k\in N_G(v_i)$. Continuing the procedure, it is easy to see, since $G$ is connected, that $V(G)=U$ and hence $G$ is a regular graph.

\vspace*{3mm}

\noindent
${\bf Case\,2.}$ $|E^{\prime}|=0$. Let $W=V(G)\backslash U$. In this case $|W|\geq 1$ as $G$ is connected. Let $v_i$ be any vertex in $U$. Then $N_G(v_i)\subseteq W$ as $|E^{\prime}|=0$. For any vertex $v_j\in N_G(v_i)$ with $v_i\in U$, by (\ref{1w1}), we obtain $N_G(v_j)\subseteq U$ as $d_j<\Delta+d_k+\frac{\Delta\,d_k}{d_j}$. By a similar argument, one can easily see that $N_{N_G(v_i)}\subseteq U$ and $N_{N_G(v_j)}\subseteq W$ (where $N_{N_G(v_j)}$ is the neighbor of neighbor set of vertex
$v_j$). Since $G$ is connected, using the same process, it is clear that $V(G)=U\cup W$ and the subgraphs that are induced by $U$ and $W$, respectively, are empty graphs. Hence $G$ is a bipartite graph. If all the vertices in $W$ have same degree $b$, (say), then $G$ is a $(\Delta,b)$-semiregular bipartite graph. Otherwise, all the vertices have not same degree in $W$. Let $v_n$ be a minimum degree vertex of degree $\delta$ in $G$. Then $v_n\in W$. Since $G$ is connected and all the vertices have not same degree in $W$, then there exists a vertex $v_s$ in $U$ such that $v_sv_n\in E(G)$ and $v_sv_t\in E(G)$, where $v_t\in W$, $d_t>\delta$. Then by (\ref{1w1}), we obtain
  $$\Delta=\delta+d_t+\frac{\delta\,d_t}{\Delta},~\mbox{ that is, }~d_t=\frac{\Delta\,(\Delta-\delta)}{\Delta+\delta}.$$
Hence $G\in \Gamma_3$.

\vspace*{3mm}

Conversely, let $G$ be an $r$-regular graph. Then one can easily check that
$$\displaystyle{\frac{d_i+d_j}{d^2_i+d^2_j}}=\displaystyle{\frac{d_k+d_{\ell}}{d^2_k+d^2_{\ell}}}=\frac{1}{r}~\mbox{ for any two edges }v_iv_j,\,v_kv_{\ell}\in E(G).$$

Let $G$ be a $(\Delta,\delta)$-semiregular bipartite graph. Then one can easily check that
$$\displaystyle{\frac{d_i+d_j}{d^2_i+d^2_j}}=\displaystyle{\frac{d_k+d_{\ell}}{d^2_k+d^2_{\ell}}}=\displaystyle{\frac{\Delta+\delta}{\Delta^2+\delta^2}}~\mbox{ for any two edges }v_iv_j,\,v_kv_{\ell}\in E(G).$$

Let $G\in \Gamma_3$. Then $G$ is a connected bipartite graph with $(d_i,\,d_j)=(\Delta,\,\delta)$ or $(d_i,\,d_j)=\left(\Delta,\,\displaystyle{\frac{\Delta\,(\Delta-\delta)}{\Delta+\delta}}\right)$ for any edge $v_iv_j\in E(G)$.

For $v_iv_j\in E(G)$ with $(d_i,\,d_j)=(\Delta,\,\delta)$, we obtain
 $$\displaystyle{\frac{d_i+d_j}{d^2_i+d^2_j}}=\displaystyle{\frac{\Delta+\delta}{\Delta^2+\delta^2}}.$$

For $v_iv_j\in E(G)$ with $(d_i,\,d_j)=\left(\Delta,\,\displaystyle{\frac{\Delta\,(\Delta-\delta)}{\Delta+\delta}}\right)$, we obtain
 $$\displaystyle{\frac{d_i+d_j}{d^2_i+d^2_j}}=\frac{\Delta+\displaystyle{\frac{\Delta\,(\Delta-\delta)}{\Delta+\delta}}}{\Delta^2+\displaystyle{\frac{\Delta^2\,(\Delta-\delta)^2}{(\Delta+\delta)^2}}}=\displaystyle{\frac{\Delta+\delta}{\Delta^2+\delta^2}}.$$
Hence
$$\displaystyle{\frac{d_i+d_j}{d^2_i+d^2_j}}=\displaystyle{\frac{d_k+d_{\ell}}{d^2_k+d^2_{\ell}}}=\displaystyle{\frac{\Delta+\delta}{\Delta^2+\delta^2}}~\mbox{ for any two edges }v_iv_j,\,v_kv_{\ell}\in E(G).$$

\vspace*{3mm}

\noindent
This completes the proof of the result.
\end{proof}

\begin{lemma} \label{t1sv1}
 The weighted $AM$-$HM$ inequality relates the weighted arithmetic and harmonic means. It states that for any list of weights $\omega_1,\, \omega_2, \ldots,\,\omega_n \geq 0$ such that $\omega_1 + \omega_2 + \cdots + \omega_n = \omega$,
   $$\frac{\omega_1\,x_1+\omega_2\,x_2+\cdots+\omega_n\,x_n}{\omega}\geq \frac{\omega}{\frac{\omega_1}{x_1}+\frac{\omega_2}{x_2}+\cdots+\frac{\omega_n}{x_n}}$$
 with equality if and only if $x_1=x_2=\cdots=x_n$.
 \end{lemma}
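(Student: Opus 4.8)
The plan is to obtain the weighted $AM$–$HM$ inequality as an immediate specialization of Radon's inequality (Lemma \ref{k1}), which is already available. I assume throughout that $x_k>0$ for all $k$ (so that the harmonic mean is defined), and for the equality discussion I take all weights strictly positive; any index with $\omega_k=0$ contributes nothing to either side and may be discarded without affecting the inequality. The first step is to clear denominators and recast the claim in a symmetric product form. Since $\omega>0$ and $\sum_k \omega_k/x_k>0$, cross-multiplying shows that the asserted inequality is equivalent to
$$\left(\sum_{k=1}^n \omega_k\,x_k\right)\left(\sum_{k=1}^n \frac{\omega_k}{x_k}\right)\geq \omega^2=\left(\sum_{k=1}^n \omega_k\right)^2.$$

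Next I would invoke Radon's inequality (Lemma \ref{k1}) with exponent $p=1$, using the substitution (in the notation of that lemma) of $\omega_k$ in place of $x_k$ and $\omega_k\,x_k$ in place of $a_k$; both are positive under the standing hypotheses. This yields
$$\sum_{k=1}^n \frac{\omega_k^2}{\omega_k\,x_k}\geq \frac{\left(\sum_{k=1}^n \omega_k\right)^2}{\sum_{k=1}^n \omega_k\,x_k}.$$
The left-hand side simplifies to $\sum_{k=1}^n \omega_k/x_k$, and recognizing $\sum_{k=1}^n \omega_k=\omega$ on the right, a single cross-multiplication reproduces exactly the product form displayed above, and hence, dividing back, the stated $AM$–$HM$ inequality.

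The equality characterization then transfers for free. By Lemma \ref{k1}, equality holds precisely when the ratios (in its notation) $x_k/a_k$ are all equal; under the present substitution this ratio is $\omega_k/(\omega_k\,x_k)=1/x_k$, so equality forces $1/x_1=1/x_2=\cdots=1/x_n$, that is $x_1=x_2=\cdots=x_n$, which is exactly the claimed condition once the zero-weight indices have been removed.

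I expect no genuine analytic obstacle here: the result is essentially a one-line corollary of Radon's inequality with $p=1$. The only points demanding care are bookkeeping ones—verifying that the substitution $x_k\mapsto\omega_k$, $a_k\mapsto\omega_k x_k$ reproduces the correct sums, and making the standing assumptions explicit ($x_k>0$, and the discarding of zero-weight terms) so that the equality statement reads cleanly as $x_1=\cdots=x_n$. For completeness one could equally well derive the inequality from the Cauchy–Schwarz inequality applied to $a_k=\sqrt{\omega_k x_k}$ and $b_k=\sqrt{\omega_k/x_k}$, or from Jensen's inequality for the convex function $1/x$; but routing through Lemma \ref{k1} keeps the argument entirely within the machinery the paper has already set up.
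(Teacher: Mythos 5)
Your proposal is correct. Note, however, that the paper itself offers no proof of this lemma at all: it is stated as a classical fact (the weighted $AM$--$HM$ inequality) and used directly, so there is no argument in the paper to compare against line by line. Your derivation from Radon's inequality (Lemma \ref{k1}) with $p=1$ and the substitution $x_k\mapsto\omega_k$, $a_k\mapsto\omega_k x_k$ is valid: the left-hand side of Radon's inequality becomes $\sum_k \omega_k/x_k$, the right-hand side becomes $\omega^2/\sum_k \omega_k x_k$, and cross-multiplication gives exactly the product form $\bigl(\sum_k\omega_k x_k\bigr)\bigl(\sum_k\omega_k/x_k\bigr)\geq\omega^2$ equivalent to the claim; the equality condition $\omega_k/(\omega_k x_k)=1/x_k$ constant likewise transfers correctly. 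What your route buys is self-containedness --- the lemma is reduced to machinery the paper has already introduced rather than left as an external citation --- at the cost of a small amount of bookkeeping (positivity of the $x_k$, discarding zero weights) that you handle appropriately. One minor caveat worth recording: when some $\omega_k=0$ the equality characterization only constrains those $x_k$ with positive weight, so the clean statement ``$x_1=\cdots=x_n$'' holds after the zero-weight indices are removed, exactly as you say; this is an imprecision in the lemma as stated rather than in your argument.
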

We now present a relation between $ISDD$ with the first Zagreb index $M_1(G)$ and the forgotten topological index $F(G)$ of graphs.
\begin{theorem} Let $G$ be a graph with $m$ edges. Then
 $$ISDD(G)\geq \frac{M_1(G)^2}{2\,F(G)}-\frac{m}{2}$$
with equality if and only if $G$ is a regular graph or $G$ is a semiregular bipartite graph or $G\in \Gamma_3$.
\end{theorem}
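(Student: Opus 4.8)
The plan is to reduce the claimed bound to a single application of Radon's inequality (Lemma~\ref{k1}) with $p=1$, after an algebraic rewriting of the summand so that the numerator becomes $(d_i+d_j)^2$ and the denominator $d_i^2+d_j^2$. First I would record the identity
$$\frac{2\,d_i\,d_j}{d_i^2+d_j^2}=\frac{(d_i+d_j)^2}{d_i^2+d_j^2}-1,$$
which is immediate from $(d_i+d_j)^2=d_i^2+d_j^2+2\,d_id_j$. Summing this over all edges yields
$$2\,ISDD(G)=\sum\limits_{v_iv_j\in E(G)}\frac{(d_i+d_j)^2}{d_i^2+d_j^2}-m.$$

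Next I would apply Radon's inequality with $r=m$, $p=1$, and the assignments $x_k=d_i+d_j$, $a_k=d_i^2+d_j^2$ for the $k$-th edge $v_iv_j\in E(G)$. Since here $x_k^{p+1}/a_k^{p}=(d_i+d_j)^2/(d_i^2+d_j^2)$, this gives
$$\sum\limits_{v_iv_j\in E(G)}\frac{(d_i+d_j)^2}{d_i^2+d_j^2}\geq \frac{\left(\sum\limits_{v_iv_j\in E(G)}(d_i+d_j)\right)^2}{\sum\limits_{v_iv_j\in E(G)}(d_i^2+d_j^2)}=\frac{M_1(G)^2}{F(G)},$$
where I invoke the edge-sum expressions $M_1(G)=\sum_{v_iv_j\in E(G)}(d_i+d_j)$ and $F(G)=\sum_{v_iv_j\in E(G)}(d_i^2+d_j^2)$ recorded in the Introduction. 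Substituting this into the previous display and dividing by $2$ delivers the stated inequality
$$ISDD(G)\geq \frac{M_1(G)^2}{2\,F(G)}-\frac{m}{2}.$$

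For the equality discussion, I would note that Radon's inequality is tight precisely when the ratios $x_k/a_k$ are all equal, i.e. when $\displaystyle\frac{d_i+d_j}{d_i^2+d_j^2}$ takes the same value on every edge of $G$. By Lemma~\ref{1q1} this constant-ratio condition holds if and only if $G$ is regular, a semiregular bipartite graph, or a member of $\Gamma_3$, which is exactly the claimed characterization; conversely, a direct check on each of these three families (as already carried out in the converse part of Lemma~\ref{1q1}) confirms equality. The computation itself is routine, and I expect no serious obstacle: the only real content is matching the equality case of Radon's inequality to the constant-ratio condition, which is precisely the reason Lemma~\ref{1q1} was established beforehand.
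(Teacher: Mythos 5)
Your argument is correct and essentially identical to the paper's: the same identity $\tfrac{2d_id_j}{d_i^2+d_j^2}=\tfrac{(d_i+d_j)^2}{d_i^2+d_j^2}-1$, the same key estimate $\sum_{v_iv_j\in E(G)}\tfrac{(d_i+d_j)^2}{d_i^2+d_j^2}\geq \tfrac{M_1(G)^2}{F(G)}$, and the same appeal to Lemma~\ref{1q1} for the equality case. The only cosmetic difference is that you derive that estimate from Radon's inequality (Lemma~\ref{k1} with $p=1$) while the paper invokes the weighted AM--HM inequality (Lemma~\ref{t1sv1}) with $\omega_k=d_i+d_j$ and $x_k=\tfrac{d_i+d_j}{d_i^2+d_j^2}$; these are equivalent formulations yielding the identical constant-ratio equality condition.
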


\begin{proof} Setting $\omega_k=d_i+d_j$ and $x_k=\displaystyle{\frac{d_i+d_j}{d^2_i+d^2_j}}$ in Lemma \ref{t1sv1}, we obtain
  $$\frac{\displaystyle{\sum\limits_{v_iv_j\in E(G)}\,\frac{(d_i+d_j)^2}{d^2_i+d^2_j}}}{\sum\limits_{v_iv_j\in E(G)}\,(d_i+d_j)}\geq \frac{\sum\limits_{v_iv_j\in E(G)}\,(d_i+d_j)}{\sum\limits_{v_iv_j\in E(G)}\,(d^2_i+d^2_j)},$$
that is,
  $$\sum\limits_{v_iv_j\in E(G)}\,\frac{(d_i+d_j)^2}{d^2_i+d^2_j}\geq \frac{M_1(G)^2}{F(G)},$$
where
\begin{align*}
&M_1(G)=\sum\limits_{v_i\in V(G)}\,d^2_i=\sum\limits_{v_iv_j\in E(G)}\,(d_i+d_j)\\[3mm]
~\mbox{ and }~&F(G)=\sum\limits_{v_i\in V(G)}\,d^3_i=\sum\limits_{v_iv_j\in E(G)}\,(d^2_i+d^2_j).
\end{align*}
Moreover, the above equality holds if and only if $\displaystyle{\frac{d_i+d_j}{d^2_i+d^2_j}}=\displaystyle{\frac{d_k+d_{\ell}}{d^2_k+d^2_{\ell}}}$ for any two edges $v_iv_j,\,v_kv_{\ell}\in E(G)$.
Using the above result, from the definition, we obtain
\begin{align*}
ISDD(G)&=\sum\limits_{v_iv_j\in E(G)}\,\frac{d_i\,d_j}{d^2_i+d^2_j}\\[3mm]
  &=\frac{1}{2}\,\sum\limits_{v_iv_j\in E(G)}\,\left[\frac{2d_i\,d_j}{d^2_i+d^2_j}+1-1\right]\\[3mm]
  &=\frac{1}{2}\,\sum\limits_{v_iv_j\in E(G)}\,\frac{(d_i+d_j)^2}{d^2_i+d^2_j}-\frac{m}{2}\\[3mm]
  &\geq \frac{M_1(G)^2}{2\,F(G)}-\frac{m}{2}.
\end{align*}
The first part of the proof is done.

\vspace*{3mm}

Moreover, the equality holds if and only if $\displaystyle{\frac{d_i+d_j}{d^2_i+d^2_j}}=\displaystyle{\frac{d_k+d_{\ell}}{d^2_k+d^2_{\ell}}}$ for any two edges $v_iv_j,\,v_kv_{\ell}\in E(G)$, that is, if and only if $G$ is a regular graph or $G$ is a semiregular bipartite graph or $G\in \Gamma_3$, by Lemma \ref{1q1}.
\end{proof}

\vspace*{4mm}

\section{Concluding Remarks}

The inverse symmetric division deg index has been investigate in this paper in view of its mathematical properties.
In this paper we give some upper and lower bounds on $ISDD$ index of graphs in terms of several graph parameters, and characterize the corresponding extremal graphs. Moreover, we have established two relations, first one involving three topological indices $ISDD$, $GA$ and $M_2$, and the second one related with $ISDD$, $M_1$ and $F$.
Future studies on inverse symmetric division deg index $(ISDD)$ might focus on the maximal and minimal graphs for the class of $p\,(\geq 1)$-cyclic graphs.

\vspace*{6mm}
%
%
%
%
%
%
%
%

\end{document}